\documentclass[12pt,pdftex,noinfoline]{imsart}

\usepackage[usenames,dvipsnames]{xcolor}
\usepackage{natbib}
\bibpunct{(}{)}{;}{a}{,}{,}

\usepackage[pdftex,pdfborderstyle={/S/U/W 1},colorlinks]{hyperref}
\RequirePackage[OT1]{fontenc}
\RequirePackage{amsthm,amsmath,amsfonts}
\RequirePackage{hypernat}
\usepackage{fullpage}
\usepackage{graphicx}
\usepackage{hyperref}
\usepackage{amsthm, amssymb, amsmath}
\usepackage{url}
\usepackage[english]{babel}
\usepackage{times}
\usepackage[T1]{fontenc}
\usepackage{bbm}
\usepackage{xspace}
\usepackage{rotating,multirow}
\usepackage{enumerate}
\usepackage{tikz}

\def\interleave{|\kern-.25ex|\kern-.25ex|}
\def\interleavesub{|\kern-.15ex|\kern-.15ex|}

\newcommand{\nNorm}[1]{\left|\kern-.25ex\left|\kern-.25ex\left| {#1}\right|\kern-.25ex\right|\kern-.25ex\right|}

\newcommand{\E}{{\mathbb E}}

\newcommand{\eps}{{\epsilon}}

\newcommand{\R}{{\mathbb R}}

\renewcommand{\P}{{\mathbb P}}

\newcommand{\C}{{\mathcal{C}}}

\newcommand{\I}{{\mathcal{I}}}

\newcommand{\K}{{\mathcal{K}}}

\newcommand{\F}{{\cal F}}

\def\min{\mathop{\text{\rm min}}}
\def\max{\mathop{\text{\rm max}}}

\def\sup{\mathop{\text{\rm sup}}}

\numberwithin{equation}{section}
\theoremstyle{plain}
\newtheorem{theorem}{Theorem}[section]

\newtheorem{proposition}{Proposition}[section]
\newtheorem{lemma}{Lemma}[section]
\newtheoremstyle{remark}{\topsep}{\topsep}%
     {\normalfont}
     {}           
     {\bfseries}  
     {.}          
     {.5em}       
     {\thmname{#1}\thmnumber{ #2}\thmnote{ #3}}
\theoremstyle{remark}
\newtheorem{remark}{Remark}[section]

\setcounter{tocdepth}{2}
\parindent15pt
\footskip30pt

\long\def\comment#1{}

\def\P{{\mathbb P}}
\def\E{{\mathbb E}}
\def\supp{\mathop{\text{supp}\kern.2ex}}
\def\argmin{\mathop{\text{\rm arg\,min}}}

\def\argmax{\mathop{\text{\rm arg\,max}}}
\let\hat\widehat
\let\tilde\widetilde

\let\hat\widehat
\let\tilde\widetilde

\def\1{{(1)}}
\def\2{{(2)}}

\long\def\comment#1{}

\long\def\comment#1{}

\def\P{{\mathbb P}}
\def\E{{\mathbb E}}

\def\supp{\mathop{\text{supp}\kern.2ex}}
\def\argmin{\mathop{\text{\rm arg\,min}}}

\def\argmax{\mathop{\text{\rm arg\,max}}}
\let\tilde\widetilde

\let\hat\widehat
\let\tilde\widetilde

\def\1{{(1)}}
\def\2{{(2)}}

\long\def\comment#1{}

\def\threebars{\mbox{$|\kern-.25ex|\kern-.25ex|$}}

\def\K{{K}}

\def\F{\mathbb{F}}

\begin{document}

\hypersetup{citecolor=MidnightBlue}
\hypersetup{linkcolor=Black}
\hypersetup{urlcolor=MidnightBlue}

\begin{frontmatter}

\mbox{}
\vskip.25in
\centerline{\Large\bf An Improved Global Risk Bound in Concave Regression}
\runtitle{Risk Bound in Concave Regression}

\begin{aug}
\author{\fnms{Sabyasachi} \snm{Chatterjee}\ead[label=e1]{sabyasachi@galton.uchicago.edu}}
\address{
\vskip1pt
\begin{tabular}{c}
Department of Statistics \\
The University of Chicago 
\end{tabular}
\\[10pt]
\today\\[5pt]
}
\end{aug}

\maketitle 

\begin{abstract}
A new risk bound is presented for the problem of convex/concave function estimation, using the least squares estimator. The best known risk bound, as had appeared in~\citet{GSvex}, scaled like $\log(en) n^{-4/5}$ under the mean squared error loss, up to a constant factor. The authors in~\cite{GSvex} had conjectured that the logarithmic term may be an artifact of their proof. We show that indeed the logarithmic term is unnecessary and prove a risk bound which scales like $n^{-4/5}$ up to constant factors. Our proof technique has one extra peeling step than in a usual chaining type argument. Our risk bound holds in expectation as well as with high probability and also extends to the case of model misspecification, where the true function may not be concave.
\end{abstract}

\vskip15pt
\end{frontmatter}

\section{Introduction}
In this paper we consider the problem of estimating a concave function in a standard additive noise regression model. We observe $y = (y_1,\dots,y_n)$ which are noisy evaluations of an underlying concave function defined on the unit interval $[0,1]$ on sorted design points $x = (x_1 < \dots < x_n) \in [0,1]^n$  as follows;
\begin{equation}
y_i = f(x_i) + z_i
\end{equation} 
with $z = (z_1,\dots,z_n)$ being independent $N(0,\sigma^2)$ error for some unknown $\sigma$. The task of estimating the regression function $f$ when knowing that the underlying $f$ is concave is known as the concave regression problem. This problem has a relatively long history.

Interestingly, the problem of concave regression was discussed in the literature by~\citet{Hildreth54} even before the problem of monotone regression (see \citet{RWD88}, \citet{ayer1955empirical}), which is perhaps the most well studied problem in shape constrained estimation. \cite{Hildreth54} discusses some applications in economics where concavity arises naturally, such as in production functions and utility functions. A very natural estimator of a concave regression function is the constrained least squares estimator (LSE) which is defined as follows: 
\begin{equation}\label{eqn:concave_lse}
\hat{f} = \argmin_{f: [0,1] \rightarrow \R, f concave} \sum_{i = 1}^{n} (y_i - f(x_i))^2.
\end{equation}
As is clear, the LSE $\hat{f}$ is not unique but is uniquely defined at the design points. This estimator was proposed in~\cite{Hildreth54} along with quadratic programming methods for solving \eqref{eqn:concave_lse}. The first statistical analysis for the LSE was carried out by~\citet{HanPled76} where the consistency of the LSE was established under the supremum loss over a compact interval. The problem of estimation of the concave function under a local loss function, that is the problem of estimating the regression function $f$ at a given point $x_0,$ has also received attention in the literature. Notable papers are~\citet{Mammen91},~\citet{GroeneboomJongbloedWellner2001a} where the asymptotic distribution of the LSE at the point $x_0$ was studied. The first work which studied rates of convergence of the LSE under a global loss appeared in~\cite{DuembgenEtAl04}. The authors there showed that under the supremum loss over a compact interval containing the design points, the rates of convergence varies from $(\frac{\log n}{n})^{1/3}$ to $(\frac{\log n}{n})^{2/5}$ depending on the smoothness of the underlying concave function. For instance, if the regression function is also twice differentiable the rate of convergence under the supremum loss is $(\frac{\log n}{n})^{2/5}.$

In the most recent work on concave regression under a global loss function,~\citet{GSvex} considered the following sequence formulation of the concave regression problem. Assuming that the design points are equally spaced, note that the mean regression vector $(f(x_1),\dots,f(x_n))$ lies in a polyhedral cone (defined by the concavity constraint on $f$) $K_n \subset \R^n$, $n \geq 3$ defined as follows:
\begin{equation*}
K_n = \{\theta \in \R^n: \theta_{i} - \theta_{i - 1} \geq \theta_{i + 1} - \theta_{i} \: \text{whenever}\: 2 \leq i \leq n - 1\}.
\end{equation*}
Then it is clear that if $x_1 < x_2 \dots < x_n$ are equispaced design points in $\R$ then one can also write $$K_n = \{\theta \in \R^n: \theta = (f(x_1),\dots,f(x_n))\:\:\text{for some concave function}\:\: f: \R \rightarrow \R\}.$$ 
The problem of estimating the entire concave function in the sequence model then becomes the problem of estimating $\theta^* \in K_n$ from observations
\begin{equation}\label{model}
y_i = \theta^*_i + z_i \:\:\:\:\forall 1 \leq i \leq n
\end{equation}
where $z_i$ are i.i.d $N(0,\sigma^2)$ random variables. 

In this paper, we study this sequence formulation of the concave regression problem, assuming equally spaced design points. Specifically, we study the risk properties of the Least Squares Estimator $\hat{\theta}$ defined as 
\begin{equation}
\hat{\theta} = \argmin_{\theta \in K_n} \|y - \theta\|^2,
\end{equation}
where $\|\cdot\|$ refers to the usual Euclidean norm on $\R^n.$ We are interested in studying the risk behaviour of $\hat{\theta}$ under the natural mean squared error loss function defined as
\begin{equation}
R(\hat{\theta},\theta^*) = \E \frac{1}{n} \|\hat{\theta} - \theta^*\|^2,
\end{equation}
where the expectation is taken under the distribution of $y$ as given in~\eqref{model}.

To describe the results of \cite{GSvex} under the risk function $R(\hat{\theta},\theta^*),$ we first have to set up some notation. Let $L$ be the subspace of $\R^n$ spanned by the constant vector $(1,\dots,1)$ and the vector $(1,2,\dots,n).$ In words, $L$ is the subspace of $n$ dimensional affine sequences. Let $P_{L}$ denote the orthogonal projection matrix to the subspace $L.$ Define $G_{\theta^*} = \max\{1,\frac{1}{n} \|(I - P_{L})\theta^*\|^2\}.$ Then Theorem 2.2 in~\cite{GSvex} shows that for any $\theta^* \in K_n$  there exists a universal constant $C$ such that whenever the sample size $n \geq C \frac{\sigma^2}{(G_{\theta^*})^2} (\log \frac{en}{2})^{5/4},$ we have
\begin{equation}\label{gb}
R(\hat{\theta},\theta^*) \leq C (\log \frac{en}{2}) \big(\frac{\sigma^2 \sqrt{G_{\theta^*}}}{n}\big)^{4/5}.
\end{equation}

To interpret the theorem, it is instructive to think of a regime where $G(\theta^*)$ stays bounded as $n$ grows. For example, this would be the case if $\theta^*$ are evaluations on a grid of a concave function $f:\R \rightarrow \R$ with $\int_{0}^{\infty} f^2(x) dx < \infty.$ In this case, the bound in~\ref{gb} yields $R(\hat{\theta},\theta^*) \leq (\log \frac{en}{2}) n^{-4/5}$ upto a multiplicative constant factor.   \cite{GSvex} conjectured that there should be no logarithmic term in the upper bound and perhaps the logarithmic term is an artifact of their proof. The goal of this paper is to show that one can derive a risk bound in the same setting as in~\eqref{gb}, but without the logarithmic factor.

The paper is organized as follows. In Section 2 we present our main result on the global risk bound, comment on its interpretations, and sketch the proof ideas. In Section 3, we present the proof of our main result. In Section 4, we state and prove an extension of our main result in the case of model misspecification.

\section{Main Result}
We now describe the main result of this paper and discuss its consequences. For any vector $\theta \in \R^n$ let us define $V(\theta) = \max_{1 \leq i \leq n} \theta_i - \min_{1 \leq i \leq n} \theta_i.$
\begin{theorem}\label{main}
Fix any positive integer $n.$ Fix any $\theta^* \in K_n.$ Fix any $x > 0.$ The following upper bound on the risk holds for a universal constant $C$ with probability greater than $1 - \exp(-x) - \exp(-x^2/16):$
\begin{align}\label{highp}
\begin{split}
\frac{1}{n} \|\hat{\theta} - \theta^*\|^2 \leq \:\:\frac{1}{n} \sigma^2(2 + 17x) \:+ C \sigma^{8/5} \big(V\big((I - P_{L}) \theta^*\big) + \sigma \big)^{2/5} n^{-4/5}.
\end{split}
\end{align}
Moreover, the above high probability risk bound also immediately implies a risk bound in expectation for a universal constant $C$:
\begin{equation}\label{myexpbd}
R(\hat{\theta},\theta^*) \leq C \sigma^{8/5} \big(V\big((I - P_{L}) \theta^*\big) + \sigma \big)^{2/5} n^{-4/5} + \frac{2 \sigma^2}{n}.
\end{equation}
\end{theorem}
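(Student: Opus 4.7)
I'd start by exploiting that the subspace $L$ (span of the constant and linear sequences) lies in the lineality space of $K_n$: since the second-difference operator defining $K_n$ annihilates $L$, first-order optimality forces $P_L\hat\theta = P_L y$, and Pythagoras gives
$$\|\hat\theta - \theta^*\|^2 = \|P_L z\|^2 + \|u\|^2,\qquad u := (I - P_L)(\hat\theta - \theta^*).$$
The first summand is $\sigma^2$ times a $\chi^2_2$, so Laurent--Massart tails yield $\|P_L z\|^2 \leq \sigma^2(2 + 17 x)$ with probability $\geq 1 - e^{-x}$, producing the parametric term of~\eqref{highp}. The decoupling along $L$ also shows that $(I - P_L)\hat\theta$ coincides with the LSE of $(I-P_L)y$ on the pointed cone $K_n \cap L^\perp$, so the basic inequality for the reduced problem reads $\|u\|^2 \leq 2 \langle (I - P_L)z,\ u\rangle$, and it suffices to show that $\|u\|^2$ is at most the desired nonparametric rate on an event of probability $\geq 1 - e^{-x^2/16}$.

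For the nonparametric piece I'd invoke the classical metric entropy bound $\log N(\epsilon, S(t, b), \|\cdot\|) \lesssim (b\sqrt n / \epsilon)^{1/2}$ on the set $S(t,b) := \{v \in (K_n \cap L^\perp) - (I-P_L)\theta^* : \|v\| \leq t,\ V(v) \leq b\}$. Dudley's chaining combined with Borell--TIS concentration then gives, for all $t, b, s > 0$,
$$\P\!\left(\sup_{v \in S(t,b)}\langle (I-P_L)z,\, v\rangle \geq C\sigma (b\sqrt n)^{1/4} t^{3/4} + \sigma t\, s\right) \leq e^{-s^2/2}.$$
Balancing the deterministic term against $t^2/2$ produces the self-consistent radius $t_* \asymp \sigma^{4/5}(b + \sigma)^{1/5} n^{1/10}$ and hence the target rate $t_*^2/n \asymp \sigma^{8/5}(b + \sigma)^{2/5} n^{-4/5}$ once $b \lesssim V((I-P_L)\theta^*) + \sigma$.

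The extra peeling step --- the new ingredient relative to~\citet{GSvex} --- is needed because the effective bound $B := V(u)$ is itself random. I'd peel simultaneously over dyadic scales $r_k = 2^k t_*$ and $B_\ell = 2^\ell (V((I-P_L)\theta^*) + \sigma)$; on each slice $(k, \ell)$ the chaining tail fails only on an event of probability $\lesssim \exp(-c(2^{2k} + 2^{\ell/2}))$, thanks to the asymmetric powers of $t$ and $b$ in the Dudley bound. The double geometric sum over $(k, \ell)$ converges to a universal constant and, with $s$ chosen proportional to $x$, collapses to the desired $e^{-x^2/16}$ tail. By contrast, peeling only over $\|u\|$ as in~\citet{GSvex} forces a uniform deterministic cap $B \lesssim V + \sigma\sqrt{\log n}$ and injects the stray logarithm we are removing. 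The main technical subtlety will be calibrating the two dyadic grids so the combined tail is dominated by $e^{-x^2/16}$ with the correct leading constants; once this is in place, the expectation bound~\eqref{myexpbd} follows by integrating the sub-Gaussian tail and adding back the $\chi^2_2$ contribution from the parametric piece.
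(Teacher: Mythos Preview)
Your reduction along $L$ is exactly what the paper does, and your entropy/Dudley calculus leading to $t_*\asymp\sigma^{4/5}(b+\sigma)^{1/5}n^{1/10}$ is correct. The gap is in the ``extra peeling step.''

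You interpret the extra peeling as a second dyadic grid over $V(u)$, claiming that on the slice $(k,\ell)$ the failure probability is $\lesssim\exp\bigl(-c(2^{2k}+2^{\ell/2})\bigr)$. But nothing in your setup produces decay in $\ell$. On that slice the basic inequality forces
\[
\tfrac12 r_{k-1}^{2}\ \le\ \sup_{v\in S(r_k,B_\ell)}\langle z',v\rangle,
\]
and Dudley gives $\E\sup\asymp 2^{\ell/4+3k/4}t_*^{2}$, which \emph{increases} in $\ell$. Once $\ell\gtrsim 5k$ this mean already exceeds $r_{k-1}^{2}\asymp 2^{2k}t_*^{2}$, so Borell--TIS yields no nontrivial tail; the sum over $\ell$ diverges. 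Peeling over $V(u)$ would require an independent a~priori tail for $\{V(u)\ge B_\ell\}$, which you do not supply and which is essentially the hard part.

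The paper's extra step is quite different and does not peel over $V(u)$ at all. It works \emph{inside} the bound for the localized Gaussian width $f_{\theta^*}(t)$: for each competitor $\theta\in K_n$ with $\|\theta-\theta^*\|\le t$ one defines a truncation $\theta'$ (at levels $\theta^*_{\min}-L$ and $\theta^*_{\max}+L$ with $L\asymp\sigma$) and splits $\langle z,\theta-\theta^*\rangle=\langle z,\theta-\theta'\rangle+\langle z,\theta'-\theta^*\rangle$. The second piece lives in a space of \emph{bounded} three-piece concave sequences and is handled directly by Dryanov's entropy. The first piece is where the dyadic ``peeling'' occurs, over the level sets $\{i:2^{j}L<|\theta_i-\theta^*_{\text{extreme}}|\le 2^{j+1}L\}$. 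The crucial geometric fact---using that $\theta$ is concave with a mode at a fixed index and that $\theta^*$ is monotone---is that each such level set is contained in at most three intervals of total length $\le t^{2}/(2^{2j}L^{2})$ located at the endpoints and the mode. Summing $2^{j+1}L\cdot(\text{interval length})\cdot\E|z_1|$ over $j$ gives $O(t^{2}\sigma/L)=t^{2}/8$, absorbed into the $t^{2}/2$ comparison. This deterministic truncation replaces your random cap on $V(u)$ and is the idea that removes the logarithm.

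Two minor points: you have the tail probabilities swapped relative to the statement (the $\chi^2_2$ term supplies the $e^{-x^2/16}$ tail, and Bellec's lemma the $e^{-x}$ tail with the $16\sigma^2 x$ term that yields the constant $17$); and the paper bounds the expected supremum once (via the truncation) and then invokes Bellec's tail, rather than running Borell--TIS slice by slice.
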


Theorem~\ref{main} proves a high probability bound for $\frac{1}{n} \|\hat{\theta} - \theta^*\|^2$ which then immediately leads to a bound in expectation. We show in Section~\ref{secmisspec} that we can also extend our risk bound to the case of model misspecification. By model misspecification, we mean the case when the true underlying sequence $\theta^*$ is not concave. In this case, we prove that the risk of the LSE $R(\hat{\theta},\theta^*)$ is at most the squared Euclidean distance of $\theta^*$ to $K_n$ divided by $n,$ plus a term which goes down to zero at the rate $n^{-4/5}.$

There are two essential differences between the risk bound~\eqref{myexpbd} and the bound~\eqref{gb}. The first difference is that we have no logarithmic terms in our risk bound. In this way, we improve over the best known risk bound in concave/convex regression. The second difference is that the term $G(\theta^*)$ is replaced by $V\big((I - P_{L}) \theta^*\big).$ In general, $V\big((I - P_{L}) \theta^*\big)$ could be larger than $G(\theta^*)$ but stays bounded by a constant if $\theta^*$ is a vector of evaluations of a concave function $f$ on a grid.

Our analysis actually carries through for slightly more general design points than just equally spaced ones; see Remark~\ref{design} for more explanation. Coming to the issue of optimality of the LSE, the rate $n^{-4/5}$ is known to be minimax rate optimal even if one considers the parameter space to be an appropriate local ball around a convex function with positive curvature everywhere; see Theorem 5.1 in~\cite{GSvex} for the precise result. Apart from minimax rate optimality, a further motivation to study the LSE in concave regression is that it was shown to have automatic adaptation properties first in~\cite{GSvex} and subsequently in~\cite{chatterjee2015risk} and~\cite{bellec2015sharp}. Specifically, it was shown that the risk $R(\hat{\theta},\theta^*)$ scaled like $\frac{k}{n} \log n$ for piecewise linear concave sequences $\theta^*$ with $k$ pieces. In this paper though, our focus is not on the automatic adaptive properties of the LSE, but rather in proving an improved worst case performance.

It is worthwhile to point out that we do not assume any smoothness assumptions on the underlying concave function. Considering the result of~\citet{DuembgenEtAl04}, their loss function is the supremum loss function over an appropriate compact interval. Notwithstanding some details, the mean squared error loss function we consider is naturally bounded by the square of the supremum loss function that is considered in~\citet{DuembgenEtAl04}. Hence their risk bound directly applied to the mean squared error loss scales like $(\frac{\log n}{n})^{4/5}$ only when the underlying concave function is twice differentiable. Our analysis shows that when the design points are equally spaced, we do not need any smoothness assumptions on the underlying concave function to obtain the $n^{-4/5}$ rate in addition to showing that the logarithmic factor is unnecessary.

\subsection{Proof Sketch for Theorem \ref{main}}
The goal of this subsection is to provide a high level overview of the method of proof of Theorem~\ref{main}. We take the standard empirical process based approach in proving our risk bound. Specifically we take the recipe proposed by~\citet{Chat14} which says that a key ingredient in proving risk bounds for the LSE is to control an expected Gaussian suprema term described as follows. Let $\langle a,b \rangle$ denote the usual inner product between any two vectors $a,b.$ For any $\theta^* \in \R^n$ define the function $f_{\theta^*}: \R_{+} \rightarrow \R$ as follows:
\begin{equation}\label{gausup}
f_{\theta^*}(t) = \E \sup_{\theta \in K_n: \|\theta - \theta^*\| \leq t} \langle z, \theta - \theta^* \rangle
\end{equation}
where $z$ is a random Gaussian vector with each entry being independent, has mean zero and variance $\sigma^2.$~\cite{Chat14} also shows that if one obtains $s > 0$ such that $f_{\theta^*}(s) \leq \frac{s^2}{2}$ then essentially one gets the risk bound $R(\hat{\theta},\theta^*) \leq s^2/n$ up to constant factors. Precise statements are given in later sections. Therefore, it suffices to tightly upper bound the function $f_{\theta^*}$ and this paper basically gives a new way of upper bounding the function $f_{\theta^*}(t)$ for all $t \geq 0.$

Since $f_{\theta^*}(t)$ is an expected Gaussian maxima, a standard tool in empirical process theory to upper bound $f_{\theta^*}(t)$ is to use the Dudley's entropy integral bound which requires good estimates of the covering number of the set $K_n \cap B(\theta^*,t)$ where $B(\theta^*,t)$ refers to the Euclidean ball of radius $t$ centred at $\theta^*.$ Tight estimates (without logarithmic factors) of the covering numbers for the space of bounded convex sequences are available in the literature, see~\citet{Dryanov}. Specifically the result of~\cite{Dryanov} gives us tight upper bounds of the covering number for the space $\{\theta \in K_n: \max_{1 \leq i \leq n} |\theta_i| \leq B\}$ for any constant $B.$ But we require covering numbers for $K_n$ intersected with a Euclidean ball of radius $t.$ This was done in~\cite{GSvex} by applying the basic result of~\cite{Dryanov} in appropriate subintervals. This approach gives rise to logarithmic factors in the risk bound as had appeared in~\cite{GSvex}.

Our approach is to do one more step of refining the function $f_{\theta^*}.$ Essentially for any $\theta \in K_n \cap B(\theta^*,t)$ we define its truncated version $\theta^{'} \in K^{'}_n$ where the set $K^{'}_n \subset \R^n$ is not a much larger set than $K_n$ and hence has comparable metric entropy. Note that one can write 
\begin{equation}\label{eqbrkintro}
\E \sup_{\theta \in K_n} \langle z, \theta - \theta^{*} \rangle \leq \E \sup_{\theta \in K_n} \langle z, \theta - \theta^{'} \rangle + \E \sup_{\theta^{'}} \langle z, \theta^{'} - \theta^{*} \rangle.
\end{equation}

We control the two terms on the right side of the above inequality separately. We show it is possible to define the truncation $\theta^{'}$ of $\theta$ such that it satisfies two critical properties. The first property is that the first term on the right side of~\eqref{eqbrkintro} is upper bounded by $\frac{t^2}{4}.$ Since we finally have to compare $f_{\theta^*}(t)$ with $\frac{t^2}{2}$ an extra factor of $\frac{t^2}{4}$ can only affect the risk bound upto constants. Now we are left with the task of upper bounding the second term in the right side of~\eqref{eqbrkintro}. The second critical property that $\theta^{'}$ satisfies is that $\theta^{'}$ is bounded entrywise by $C\big(V(\theta^*) + \sigma\big)$ where $C$ is a universal constant. This means that $\theta^{'}$ is bounded by a constant factor. Hence the second term in the right side of~\eqref{eqbrkintro} can now be controlled by a direct application of Dudley's entropy integral bound. This only requires getting tight estimates of the covering number of bounded sequences in $K^{'}_n.$ We show that this covering number is very similar to the one given in~\cite{Dryanov} (and hence has no logarithmic factors) as $K^{'}_n$ is not much bigger than $K_n.$ In this way our extra refinement step enables us to save a logarithmic factor of $n.$

The main crux of this paper lies in defining $\theta^{'}$ as described in the previous paragraph. This is done in Lemma~\ref{key1} which is perhaps the most important step in our entire argument. In Lemma~\ref{key1} we actually assume our underlying concave sequence $\theta^*$ is also monotonic. Since a concave sequence always first increases and then decreases, it actually suffices to analyze the case when $\theta^*$ is monotonic concave. We use the monotonicity of $\theta^*$ crucially in coming up with a definition of the truncation $\theta^{'}$ of $\theta,$ satisfying the critical properties as explained in the previous paragraph. 
 
\section{Proof of Main Result}

\subsection{\textbf{Proof of Theorem~\ref{main}}}\hspace*{\fill}\\
The goal of this section is to state and prove Theorem~\ref{main} which improves the best known risk bound in concave regression by removing a logarithmic term. Before starting to prove the above theorem, we first go through some background results. By now it is known that a key ingredient in proving risk bounds for the LSE is to control the expected Gaussian suprema function $f_{\theta^*}$ as defined in~\eqref{gausup}. For any $\theta^* \in K_n$ it was actually shown in Theorem 1.1 in~\citet{Chat14} that the loss term $\|\hat{\theta} - \theta^*\|$ concentrates around a deterministic value $t_{\theta^*} = \argmax_{t \geq 0} f_{\theta^*}(t) - \frac{t^2}{2}.$ Another result providing upper bounds on the loss term $\|\hat{\theta} - \theta^*\|$ in terms of the function $f_{\theta^*}$ has been given in Theorem 12 in~\citet{bellec2015sharp} which we use as our starting point. Before describing this result, we claim that since the projection onto the cone $K_n$ is a sum of projection onto the subspace $L$ and the projection onto the cone $K_n \cap L^{\perp}$ it almost suffices to study the risk of the least squares estimator constrained to the cone $K_n \cap L^{\perp}$ denoted by $\hat{\theta}_{K_n \cap  L^{\perp}}.$ Specifically we mean 
\begin{equation*}
\hat{\theta}_{K_n \cap  L^{\perp}} = \argmin_{\theta \in K_n \cap L^{\perp}} \|y - \theta^2\|.
\end{equation*}
Here $L^{\perp}$ is the subspace of $\R^n$ orthogonal to $L.$ We make this claim clearer when we prove Theorem~\ref{main}. With this viewpoint, we now state the following lemma which is a direct consequence of Theorem 12 in~\cite{bellec2015sharp}, applied to the cone $K_n \cap L^{\perp}.$

\begin{lemma}[~\cite{bellec2015sharp}]\label{tech}
Let $\theta^* \in K_n \cap L^{\perp}.$ Let $s > 0$ be such that $$\E \sup_{\theta \in K_n \cap L^{\perp}: \|\theta - \theta^*\| \leq s} \langle z, \nu - \theta^* \rangle \leq \frac{s^2}{2}.$$ Then for any $x >0$ the following inequality holds with probability greater than $1 - \exp(-x),$
\begin{equation*}
\|\hat{\theta}_{K_n \cap  L^{\perp}} - \theta^*\|^2 \leq 2 \max\{s^2,8\sigma^2 x\}.
\end{equation*}
Here $\hat{\theta}_{K_n \cap  L^{\perp}}$ is the least squares estimator constrained to the cone $K_n \cap L^{\perp}.$ Also $z$ is a $n$ dimensional gaussian random vector with mean zero and covariance matrix $\sigma^2 I.$
\end{lemma}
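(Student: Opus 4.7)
The plan is to invoke Theorem 12 of \cite{bellec2015sharp} directly, as the lemma is presented as nothing more than that general result instantiated in the special case where the closed convex constraint set is taken to be $K_n \cap L^{\perp}$. The bulk of the work is therefore a matter of verifying that the hypotheses of the cited theorem hold in this setting, with essentially no new argument required.

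First I would check that $K_n \cap L^{\perp}$ meets the structural requirements of Theorem 12. Since $K_n$ is a polyhedral cone cut out by finitely many linear inequalities and $L^{\perp}$ is a linear subspace of $\R^n$, their intersection is a closed convex cone containing the origin; in particular it is a nonempty closed convex set, which is what the cited theorem requires. Because $\theta^* \in K_n \cap L^{\perp}$ by assumption, the target vector lies in the constraint set, and $\hat{\theta}_{K_n \cap L^{\perp}}$ is the unique Euclidean projection of $y$ onto this closed convex set, which is precisely the estimator to which Theorem 12 applies.

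Second I would verify the local Gaussian width condition: Theorem 12 asks for a scalar $s > 0$ with $\E \sup_{\theta \in K,\, \|\theta - \theta^*\| \leq s} \langle z, \theta - \theta^* \rangle \leq s^2/2$ for $K = K_n \cap L^{\perp}$. This is exactly what is assumed in the statement of the lemma (reading the stray $\nu$ appearing there as a typo for $\theta$). Feeding $K = K_n \cap L^{\perp}$, the given value of $s$, and an arbitrary $x > 0$ into Theorem 12 then produces the event of probability at least $1 - \exp(-x)$ on which $\|\hat{\theta}_{K_n \cap L^{\perp}} - \theta^*\|^2 \leq 2 \max\{s^2, 8\sigma^2 x\}$, which is precisely the asserted conclusion.

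The only real obstacle is bookkeeping: one must confirm that the constants $2$ and $8$ and the noise normalization $\sigma^2$ appear in Theorem 12 exactly as written in the lemma statement, and that the cited theorem is stated at the level of generality of arbitrary closed convex subsets of $\R^n$ rather than only for the ambient concavity cone. Because the cited result is already formulated at that level of generality, no additional peeling, chaining, or concentration argument is required, and the lemma follows by a direct quotation of the cited bound.
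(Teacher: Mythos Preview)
Your proposal is correct and matches the paper's own treatment: the paper does not give a separate proof of this lemma but simply states it as ``a direct consequence of Theorem 12 in~\cite{bellec2015sharp}, applied to the cone $K_n \cap L^{\perp}$,'' which is exactly the instantiation you describe. The only additional remark in the paper is that Theorem~12 of \cite{bellec2015sharp} actually yields a slightly stronger (misspecification) bound, deferred to Section~4.
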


\begin{remark}
Actually Theorem 12 in~\citet{bellec2015sharp} implies a slightly stronger bound and is useful in the case of model misspecification. This issue is discussed in Section 4.
\end{remark}

Recall the definition of the function $f_{\theta^*}$ in~\eqref{gausup}. Also recall that for any vector $\theta \in \R^n$ the range of $\theta$ is denoted by $V(\theta).$ The key step in proving Theorem~\ref{main} is to obtain the following key upper bound on $f_{\theta^*}$ which we state as a proposition:
\begin{proposition}\label{key}
Fix a positive integer $n$ and fix any $\theta^* \in K_n.$ Also fix any $t > 0.$ Then there exists a universal constant $C$ such that the following inequality holds :
\begin{equation*}
f_{\theta^*}(t) \leq C \sigma \big\{\big(V(\theta^*) + \sigma \big)^{1/4} n^{1/8} t^{3/4} + \sqrt{\log n} t\big\} + \frac{t^2}{4}.
\end{equation*}
\end{proposition}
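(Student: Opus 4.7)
The proposition is a bound on the expected Gaussian width $f_{\theta^*}(t)$ of the local set $K_n \cap B(\theta^*,t)$, and the target shape $C\sigma (V(\theta^*)+\sigma)^{1/4} n^{1/8} t^{3/4}$ is what a Dudley integral would produce from a Dryanov-type covering bound of the form $\log N(\epsilon, \text{bounded concave}, \|\cdot\|) \asymp (B/\epsilon)^{1/2}$ on a set bounded by $B$. The job of the proof is therefore to reduce the local supremum $\sup_{\theta \in K_n,\,\|\theta-\theta^*\|\le t} \langle z,\theta-\theta^*\rangle$ to a supremum over a bounded set of size $B \asymp V(\theta^*)+\sigma$, absorbing the unbounded ``tails'' of $\theta - \theta^*$ into the quadratic slack $t^2/4$.

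\textbf{Step 1: reduction to the monotone case.} A sequence $\theta^* \in K_n$ has a (possibly non-unique) maximizing index $m$; to the left of $m$ it is increasing concave and to the right of $m$ it is decreasing concave. I would therefore split each competing $\theta \in K_n$ at its own peak and use the sub-additivity of suprema together with a union bound over the $n$ possible peak locations to reduce the problem to the case where both $\theta^*$ and the feasible $\theta$ are monotone concave on the whole interval (say both nondecreasing). The union bound over peak positions is the natural source of the $\sqrt{\log n}$ factor, paid for by a Gaussian concentration step (e.g. Borell's inequality) applied to $\sup_{\theta \in K_n \cap B(\theta^*,t)} \langle z,\theta-\theta^*\rangle$, whose standard deviation is at most $\sigma t$. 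After this step it suffices to prove the $(V(\theta^*)+\sigma)^{1/4} n^{1/8} t^{3/4}$ bound on the monotone-concave local Gaussian width.

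\textbf{Step 2: the truncation (Lemma \ref{key1}).} This is the heart of the argument. For monotone concave $\theta^*$ and any competing monotone concave $\theta$ with $\|\theta-\theta^*\|\le t$, I would invoke Lemma \ref{key1} to produce $\theta' \in K_n'$ (a slightly enlarged monotone-concave-like cone) satisfying (a) $\|\theta'\|_\infty \le C(V(\theta^*)+\sigma)$ and (b) the ``residual'' $\theta-\theta'$ has small Gaussian width, specifically $\E \sup_{\theta} \langle z, \theta-\theta'\rangle \le t^2/4$. Concretely, $\theta'$ is obtained by truncating $\theta$ at the level $C(V(\theta^*)+\sigma)$ above $\theta^*$ and then re-concavifying; monotonicity of $\theta^*$ is used to guarantee that the indices where $\theta$ sticks far above $\theta^*$ form an interval on which $\theta-\theta'$ has enough ``$\ell^2$ mass'' to be charged to $t^2$. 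Splitting via triangle inequality gives
\begin{equation*}
f_{\theta^*}(t) \;\le\; \E \sup_{\theta} \langle z, \theta-\theta'\rangle \;+\; \E \sup_{\theta'} \langle z, \theta'-\theta^*\rangle \;\le\; \frac{t^2}{4} + \E \sup_{\theta' \in K_n',\,\|\theta'\|_\infty \le B} \langle z, \theta' - \theta^*\rangle,
\end{equation*}
where $B = C(V(\theta^*)+\sigma)$.

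\textbf{Step 3: Dudley's integral with Dryanov's covering bound.} The remaining term is a Gaussian width over a bounded set of monotone concave sequences. Dryanov's bound (extended to $K_n'$, which is not essentially larger than $K_n$) gives $\log N(\epsilon, K_n' \cap \{\|\cdot\|_\infty\le B\}, \|\cdot\|) \le C (Bn^{1/2}/\epsilon)^{1/2}$. The diameter of the set is at most $C B \sqrt{n}$, and the local radius that matters is $t$ (after intersecting with the Euclidean ball, which can only decrease covering numbers). Dudley's entropy bound then yields
\begin{equation*}
\E \sup \langle z, \theta' - \theta^*\rangle \;\le\; C\sigma \int_0^{t} \sqrt{\log N(\epsilon, \cdot, \|\cdot\|)}\, d\epsilon \;\le\; C\sigma \int_0^t (Bn^{1/2})^{1/4} \epsilon^{-1/4}\, d\epsilon \;=\; C\sigma B^{1/4} n^{1/8} t^{3/4},
\end{equation*}
which is the advertised rate. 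Combining with Step 1 adds the $\sigma\sqrt{\log n}\, t$ term, and combining with Step 2 adds the $t^2/4$ term.

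\textbf{Main obstacle.} Everything downstream of the truncation is fairly routine Dudley-integral bookkeeping; the genuine difficulty is Step 2, i.e.\ actually constructing $\theta'$ that simultaneously (i) lives in a cone $K_n'$ whose covering numbers match Dryanov's, (ii) is entrywise bounded by $C(V(\theta^*)+\sigma)$, and (iii) makes the ``tail'' contribution $\E\sup_\theta \langle z, \theta - \theta'\rangle$ absorbable into $t^2/4$. Property (iii) is delicate because the residual $\theta - \theta'$ is itself a random piece of a concave cone and one must show its Gaussian width is controlled by $\|\theta-\theta'\|^2$ (rather than $\|\theta-\theta'\|$), which is precisely where the monotonicity of $\theta^*$ from Step 1 gets used: it forces the overshoot region to be a single interval, on which $\|\theta-\theta'\|$ is large enough to dominate the local Gaussian complexity of a pure monotone concave piece.
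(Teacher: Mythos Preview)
Your plan matches the paper's architecture: split $\theta^*$ at its peak to reduce to monotone $\theta^*$ (this is exactly the paper's proof of the Proposition), union-bound over the peak location $k$ of $\theta$ via Gaussian concentration to pick up the $\sigma\sqrt{\log n}\,t$ term (Lemma~\ref{key2}), truncate to a bounded $\theta'$ and charge the residual to $t^2/4$ (Lemma~\ref{key1}), then Dudley plus Dryanov on the bounded class.

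That said, your description of the truncation step contains imprecisions that would bite if you tried to execute it as written. First, the truncation must be \emph{two-sided}: you clip $\theta$ from above at $\theta^*_n + L$ \emph{and} from below at $\theta^*_1 - L$, with $L=128\sigma$. The lower clip is essential because a concave $\theta$ within $\ell_2$-distance $t$ of $\theta^*$ can still dip arbitrarily far below $\theta^*$ near the endpoints; truncating only from above does not produce a bounded $\theta'$. After this two-sided clip, $\theta'$ is no longer concave but belongs to $K3$, the class of sequences with at most three concave blocks; this is the paper's ``$K_n'$'', and its covering numbers match Dryanov's up to an additive $2\log(n+2)$ (Lemma~\ref{3piece}). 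Second, the interval structure of the overshoot/undershoot regions comes from concavity of $\theta$ (superlevel sets of a concave sequence are intervals; sublevel sets are unions of at most two end-intervals), not from monotonicity of $\theta^*$. What monotonicity of $\theta^*$ actually buys is the inclusion $\{i:\theta_i>\theta^*_n+2^jL\}\subset\{i:\theta_i-\theta^*_i>2^jL\}$ (and the analogous one below), so that the $\ell_2$ constraint $\|\theta-\theta^*\|\le t$ bounds the \emph{cardinality} of each dyadic level set by $v_j=t^2/(2^jL)^2$. Third, and most importantly, the residual $\E\sup_\theta\langle z,\theta-\theta'\rangle$ is \emph{not} handled as a Gaussian width of a concave piece; that route would give something of order $t$, not $t^2$. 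The mechanism is that, for each dyadic level $j$, the level set is contained in \emph{fixed} intervals independent of $\theta$ (namely $\{1,\dots,v_j\}\cup\{n-v_j+1,\dots,n\}$ for the undershoot and $\{k-v_j+1,\dots,k+v_j-1\}$ for the overshoot, with $k$ fixed in Lemma~\ref{key1}). Hence $\sup_\theta\langle z,\theta-\theta'\rangle$ is dominated by the $\theta$-free random variable $\sum_j 2^{j+1}L\cdot(\text{sum of }|z_i|\text{ over those fixed intervals})$, whose expectation is $O(\sigma t^2/L)=t^2/8$ after choosing $L=128\sigma$.
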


We prove the above proposition in the next section. We first prove Theorem~\ref{main} assuming the above proposition is true.

\begin{proof}[Proof of Theorem~\ref{main}]
Let $K = K_n$ for simplicity. Fix $\theta^* \in K.$ Recall that the subspace spanned by the $n$ dimensional vectors $(1,\dots,1)$ and $(1,\dots,n)$ is denoted as $L.$ It is clear that $L$ is the smallest subspace contained in $K.$ Also recall that $P_{L}$ is the orthogonal projection matrix to the subspace $L.$ Let the projection of any $\theta \in \R^n$ onto any closed convex cone $C \subset \R^n$ be denoted by $\Pi_{C}(\theta).$ The projection exists and is unique because $C$ is a closed convex set. Now by definition of the LSE and by orthogonal decomposition of projections we have 
\begin{equation*}
\hat{\theta} = \Pi_{K} y = \Pi_{K}(\theta^* + z)  = \Pi_{K \cap L^{\perp}}(\theta^* + z)  + P_{L}(\theta^* + z).
\end{equation*}
Let $P_{L^{\perp}} = I - P_{L}$ and  $\mu^* = P_{L^{\perp}} \theta^*.$ Now we can write
\begin{align}\label{deco}
\begin{split}
&\|\hat{\theta} - \theta^*\|^2 = \|\Pi_{K \cap L^{\perp}}(\theta^* + z) - P_{L^{\perp}} \theta^*\|^2 + \|P_{L}(\theta^* + z) - P_{L} \theta^*\|^2 = \\& \|\Pi_{K \cap L^{\perp}}(\mu^* + z) - \mu^*\|^2 + \|P_{L} z\|^2.
\end{split}
\end{align}
The first equality is a sum of squares decomposition using orthogonality. The second equality is because $\Pi_{K \cap L^{\perp}}(\theta^* + z) = \Pi_{K \cap L^{\perp}}(\mu^* + z)$ by definition of $\mu^*.$ This can be checked by the usual KKT conditions for the projection of a point onto a closed convex cone. Now since $L$ is a subspace with dimension $2$ the term $\frac{\|P_{L} z\|^2}{\sigma^2}$ is distributed as a $\chi^2$ random variable with $2$ degrees of freedom. Hence by standard tail inequalities of a $\chi^2$ random variable we have the following inequality which holds for any $x > 0$ with probability greater than $1 - \exp(-x^2/16),$
\begin{equation}\label{chisq}
\|P_{L} z\|^2 \leq \sigma^2(2 + x). 
\end{equation}
Now since $\mu^* \in K \cap L^{\perp}$ and $\|\Pi_{K \cap L^{\perp}}(\mu^* + z) - \mu^*\|^2$ is exactly the squared error loss term for the least squares estimator constrained to the cone $K \cap L^{\perp}$ we can use Lemma~\ref{tech} to upper bound $\|\Pi_{K \cap L^{\perp}}(\mu^* + z) - \mu^*\|^2.$ First we use Proposition~\ref{key} to get the following inequality inequality for all $t \geq 0,$
\begin{align*}
&\E \sup_{\nu \in K_n \cap L^{\perp}: \|\nu - \mu^*\| \leq t} \langle z, \nu - \mu^* \rangle \leq f_{\mu^*}(t) \leq \\& C \sigma \big\{\big(V(\mu^*) + \sigma \big)^{1/4} n^{1/8} t^{3/4} + \sqrt{\log n} t\big\} + \frac{t^2}{4}
\end{align*}
where $C$ is a universal constant. Now it is not too hard to check that by setting $s = \big\{C \sigma \big(V(\mu^*) + \sigma\big)^{1/4} n^{1/8}\big\}^{4/5}$ for a large enough universal constant $C$ we have $f_{\mu^*}(s) < \frac{s^2}{2}.$ Setting this value of $s$ in Lemma~\ref{tech} alongwith the last display then gives us the following bound which holds for any $x > 0$ with probability greater than $1 - \exp(-x),$
\begin{equation}\label{gw}
\|\Pi_{K \cap L^{\perp}}(\mu^* + z) - \mu^*\|^2 \leq 2 \max\{C \sigma^{8/5} \big(V(\mu^*) + \sigma \big)^{2/5} n^{1/5},8\sigma^2 x\}.
\end{equation}
Using the upperbounds~\eqref{chisq} and~\eqref{gw} and combining them in~\eqref{deco} by a simple union bound argument finishes the proof for the high probability bound statement in~\eqref{highp}. To prove the risk bound in expectation, let us denote $W = \|\Pi_{K \cap L^{\perp}}(\mu^* + z) - \mu^*\|^2.$ Then we have the following inequality for any $v \geq 0$ due to~\eqref{gw}:
\begin{equation*}
P(W \geq 16 \sigma^2 v) =  \begin{cases} 1 &\mbox{if }  C \sigma^{8/5} \big(V(\mu^*) + \sigma \big)^{2/5} n^{1/5} > 16 \sigma^2 v \\ \leq \exp(-v) &\mbox{if }  C \sigma^{8/5} \big(V(\mu^*) + \sigma \big)^{2/5} n^{1/5} < 16 \sigma^2 v \end{cases}
\end{equation*}

Since $\E W = \int_{0}^{\infty} P(W \geq v) dv,$ simple integral calculus now gives us
\begin{equation*}
\E W \leq C \sigma^{8/5} \big(V(\mu^*) + \sigma \big)^{2/5} n^{1/5}
\end{equation*}
where $C$ is a universal constant. Also we have 
\begin{equation*}
\E \|P_{L} z\|^2 \leq 2 \sigma^2. 
\end{equation*}
because $\frac{\|P_{L} z\|^2}{\sigma^2}$ is a $\chi^2$ random variable with degrees of freedom $2.$ The last two displays alongwith~\eqref{deco} finish the proof of the risk bound~\eqref{expbd}.
\end{proof}

\begin{remark}\label{design}
We remark that we have defined $K$ to be the space of concave sequences obtained by evaluations of a concave function $f:[0,1] \rightarrow \R$ on equally spaced design points $(x_1,\dots,x_n) \in \R^n.$ Our risk bound calculations carry through if $(x_1,\dots,x_n)$ are not equally spaced, the only difference being that the  universal constant $C$ would now be replaced by a constant which would only depend on the ratio $\max_{i}(x_{i + 1} - x_{i})/\min_{i}(x_{i + 1} - x_{i}).$ This means that our risk bounds would continue to hold in the slightly more general situation where the gaps between consecutive design points lie between $\frac{c_{1}}{n}$ and $\frac{c_{2}}{n}$ for some constants $c_1,c_2.$ This is in the same spirit as in the risk analysis given in~\cite{GSvex}. 
\end{remark}

\subsection{\textbf{Proof of Proposition~\ref{key}}}\hspace*{\fill}\\
In order to prove Proposition~\ref{key}, we will prove several lemmas along the way. We are required to upper bound the expected Gaussian maxima function $f_{\theta^*}.$ To do this, we use a standard chaining bound as follows:

\begin{theorem}[Chaining]\label{dudthm}
Let $\F \subset \R^n$ and fix any $\theta^* \in \F.$ Let $d = \sup_{\theta,\theta^{'} \in \F} \|\theta - \theta^{'}\|$ be the diameter of $\F.$ Then we have 
\begin{equation*}\label{gaup}
\E \left[ \sup_{\theta \in \F} \left<z, \theta - \theta^* \right> \right] \leq  12 \sigma \int_{0}^{d}
      \sqrt{\log N(\epsilon,\F)} \;d\epsilon 
\end{equation*}
where $z$ is again a $n$ dimensional Gaussian random vector with mean zero and covariance matrix $\sigma^2 \I.$
\end{theorem}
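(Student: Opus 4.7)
The plan is to prove this via the classical chaining argument of Dudley, which bounds the expected supremum of a Gaussian process by a dyadic sum of Gaussian-maximum estimates over successively finer nets and then compares that sum to the metric entropy integral. The one probabilistic ingredient is the standard bound: if $X_1, \dots, X_m$ are centered Gaussians each with variance at most $\sigma_*^2$, then $\E \max_i X_i \leq \sigma_* \sqrt{2 \log m}$. Everything else is pure bookkeeping once the net hierarchy is chosen.

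For each integer $k \geq 0$ I set $\epsilon_k = d \cdot 2^{-k}$ and let $N_k \subset \F$ be an $\epsilon_k$-net of minimal cardinality $N(\epsilon_k, \F)$; since $\F$ has diameter $d$, I may take $N_0 = \{\theta^*\}$. For each $\theta \in \F$ let $\pi_k(\theta)$ denote a closest element of $N_k$, so $\|\theta - \pi_k(\theta)\| \leq \epsilon_k$. Telescoping
\[
\theta - \theta^* = \bigl(\theta - \pi_K(\theta)\bigr) + \sum_{k=1}^{K}\bigl(\pi_k(\theta) - \pi_{k-1}(\theta)\bigr)
\]
and letting $K \to \infty$ (the remainder has norm at most $\epsilon_K \to 0$), I obtain
\[
\E \sup_{\theta \in \F} \langle z, \theta - \theta^*\rangle \leq \sum_{k=1}^{\infty} \E \sup_{\theta \in \F} \langle z, \pi_k(\theta) - \pi_{k-1}(\theta)\rangle.
\]
For each $k$, the increment $\pi_k(\theta) - \pi_{k-1}(\theta)$ takes at most $N(\epsilon_k, \F) \cdot N(\epsilon_{k-1}, \F) \leq N(\epsilon_k, \F)^2$ distinct values and has norm at most $\epsilon_k + \epsilon_{k-1} = 3\epsilon_k$ by the triangle inequality. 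Applying the Gaussian maximum bound with $\sigma_* = 3\sigma\epsilon_k$ and $m \leq N(\epsilon_k, \F)^2$ produces the per-scale estimate $6\sigma \epsilon_k \sqrt{\log N(\epsilon_k, \F)}$.

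It remains to convert the resulting tail sum to the integral. Because $N(\cdot, \F)$ is nonincreasing and $\epsilon_k - \epsilon_{k+1} = \epsilon_k/2$, for every $\epsilon \in [\epsilon_{k+1}, \epsilon_k]$ one has $\sqrt{\log N(\epsilon, \F)} \geq \sqrt{\log N(\epsilon_k, \F)}$, so
\[
\sum_{k=1}^{\infty} \epsilon_k \sqrt{\log N(\epsilon_k, \F)} = 2 \sum_{k=1}^{\infty}(\epsilon_k - \epsilon_{k+1})\sqrt{\log N(\epsilon_k, \F)} \leq 2 \int_0^d \sqrt{\log N(\epsilon, \F)} \, d\epsilon.
\]
Combining with the per-scale bound yields the advertised constant $12 = 6 \cdot 2$. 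The only mildly technical point, rather than a real obstacle, is justifying the interchange of the supremum over $\F$ with the infinite sum in the chaining step; since $\F \subset \R^n$ is automatically separable and the linear functional $\theta \mapsto \langle z, \theta\rangle$ is continuous, the supremum over $\F$ agrees almost surely with the supremum over the countable dense collection $\bigcup_k N_k$, after which each term is a finite maximum and the manipulation is rigorous. The remaining work is simply tracking the constants carefully.
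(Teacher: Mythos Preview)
The paper does not supply a proof of this theorem at all; it is simply quoted as the standard Dudley chaining bound and used as a black box. Your argument is exactly the classical proof, and the constant-tracking is clean: the link length $\|\pi_k(\theta)-\pi_{k-1}(\theta)\|\le 3\epsilon_k$, the link count bounded by $N(\epsilon_k,\F)^2$, and the Gaussian-maximum inequality $\E\max_{i\le m} X_i \le \sigma_*\sqrt{2\log m}$ combine to give $6\sigma\epsilon_k\sqrt{\log N(\epsilon_k,\F)}$ per scale, and your comparison of the dyadic sum to the integral via $\epsilon_k-\epsilon_{k+1}=\epsilon_k/2$ recovers the factor $12$ exactly. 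The only points worth flagging are cosmetic: the choice $N_0=\{\theta^*\}$ is legitimate precisely because the statement assumes $\theta^*\in\F$ and $\F$ has diameter $d$; and the passage to the limit $K\to\infty$ (or equivalently the interchange of $\sup$ and the infinite telescoping sum) is justified, as you note, by separability of $\F\subset\R^n$ together with almost-sure continuity of $\theta\mapsto\langle z,\theta\rangle$, so that the supremum over $\F$ equals the supremum over the countable set $\bigcup_k N_k$. Nothing is missing.
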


We also use a standard Gaussian Concentration Inequality. The proof can be found in the argument after equation(2.35) in~\citet{Ledoux01conc}:
\begin{theorem}\label{gaussconc}[Gaussian Concentration Inequality]
Let $z$ be an n dimensional Gaussian random vector with covariance matrix $\sigma^2 \I.$ Let $f: \R^n \rightarrow \R$ be a function that is $L$ lipschtitz, that is it satisfies $|f(x) - f(y)| \leq L \|x - y\|$ for all $x$ and $y,$ where $L$ is a positive constant. Then the following is true for any $t \geq 0,$
\begin{equation*}
\P\left(f(z) \leq \E(f(z) + \sigma t\right) \leq \exp(-\frac{t^2}{2 L^2}).
\end{equation*}
\end{theorem}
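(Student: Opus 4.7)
The plan is to prove the inequality via the Gaussian logarithmic Sobolev inequality combined with Herbst's argument. A preliminary normalization reduces matters to the standard Gaussian case: writing $W = z/\sigma$ (standard Gaussian on $\R^n$) and $g(w) = f(\sigma w)$ (which is $(\sigma L)$-Lipschitz), it suffices to show that for any $M$-Lipschitz $g\colon\R^n\to\R$ and standard Gaussian $W$,
\begin{equation*}
\P\bigl(g(W) \geq \E g(W) + u\bigr) \leq \exp\bigl(-u^2/(2M^2)\bigr)\quad\text{for all } u\geq 0,
\end{equation*}
and then to specialize to $M = \sigma L$ and $u = \sigma t$.

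The key analytic input I would use is the Gaussian log-Sobolev inequality: for every smooth $h\colon\R^n\to\R_+$,
\begin{equation*}
\int h^2 \log\bigl(h^2 / {\textstyle\int} h^2\,d\gamma_n\bigr)\,d\gamma_n \;\leq\; 2\int|\nabla h|^2\,d\gamma_n,
\end{equation*}
where $\gamma_n$ denotes the standard Gaussian measure on $\R^n$. I would first establish the one-dimensional case via Bakry--\'Emery $\Gamma_2$ calculus applied to the Ornstein--Uhlenbeck semigroup, and then tensorize across coordinates using subadditivity of entropy to obtain the $n$-dimensional inequality.

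Next I would run Herbst's argument. After mollifying $g$ (e.g., convolving with a Gaussian heat kernel), apply the log-Sobolev inequality to $h = \exp(\lambda g/2)$ for $\lambda > 0$. Setting $\psi(\lambda) = \log \E\exp(\lambda g(W))$, a short calculation using $|\nabla g| \leq M$ almost everywhere yields the differential inequality $\lambda \psi'(\lambda) - \psi(\lambda) \leq \lambda^2 M^2/2$, equivalently $(\psi(\lambda)/\lambda)' \leq M^2/2$. Since $\psi(\lambda)/\lambda \to \E g(W)$ as $\lambda\downarrow 0$, integrating gives $\psi(\lambda) \leq \lambda \E g(W) + \lambda^2 M^2/2$, i.e., the sub-Gaussian moment generating function bound $\E\exp(\lambda(g(W)-\E g(W))) \leq \exp(\lambda^2 M^2/2)$. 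A standard Chernoff--Markov step, optimized at $\lambda = u/M^2$, then yields the displayed tail bound.

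The main technical obstacle is justifying the log-Sobolev inequality when $g$ is only Lipschitz (hence merely almost everywhere differentiable); I would dispatch this via a mollification of $g$ by Gaussian heat kernels of vanishing bandwidth, applying the smooth inequality and passing to the limit by Fatou. An alternative route producing the same bound with the optimal constant is to invoke the Borell--Sudakov--Tsirelson Gaussian isoperimetric inequality directly: the $r$-enlargement of any Borel set of Gaussian measure at least $1/2$ has measure at least $1 - \exp(-r^2/2)$, and applying this to $\{g \leq m\}$ (with $m$ a median of $g(W)$), together with a standard median-to-mean comparison, yields the same tail bound. The log-Sobolev/Herbst route is more self-contained, which is why I would favor it.
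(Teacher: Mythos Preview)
The paper does not supply its own proof of this theorem; it simply points the reader to the argument following equation~(2.35) in Ledoux's monograph on the concentration of measure phenomenon. Your proposal---reduce to the standard Gaussian by rescaling, invoke the Gaussian logarithmic Sobolev inequality, and run Herbst's argument on $h=\exp(\lambda g/2)$ to obtain the sub-Gaussian moment generating function bound, then apply Chernoff---is correct and is exactly the route taken in the cited passage of Ledoux. So there is nothing to contrast: your approach coincides with the reference the paper defers to.

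One minor caveat on your aside about the Borell--Sudakov--Tsirelson alternative: the isoperimetric route gives the tail bound $\exp(-u^2/(2M^2))$ with the \emph{median} in place of the mean; passing from median to mean costs an additive shift bounded by $M\sqrt{\pi/2}$, so one does not recover the stated inequality with the mean and the exact constant $1/(2M^2)$ without a little extra work. Since you present this only as an alternative and your primary argument via log-Sobolev and Herbst delivers the sharp constant directly, this does not affect the correctness of your proposal.
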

 
We also need to use a log covering number bound for the space of bounded convex functions defined on the unit interval, as proved in~\citet{Dryanov}. We first set up some notations. For a metric space $\F$ with metric $D$ and $\epsilon > 0$, let $N(\epsilon, \F, D)$ denote the $\epsilon$-covering number of $\F$ under the metric $D.$ That is, $N(\epsilon, \F, D)$ is the minimum number of balls of radius $\eps$ required to cover $\F.$ If $\F \subset \R^n$ and $D$ is the usual Euclidean metric we simply denote the covering number by $N(\epsilon,\F).$
\begin{lemma}[Dryanov]\label{dryanov}
Let $C[0,1,B]$ be the space of real valued concave functions defined on the unit interval $[0,1]$ with absolute value bounded by $B$ for some $B > 0.$ Also let $L_2(f,g) = (\int_{0}^{1} (f(x) - g(x))^2 dx)^{1/2}$ for any $f,g \in C[0,1,B].$ Then the following is true for a universal constant $C,$
\begin{equation*}
\log N(\eps,C[0,1,B],L_2) \leq C \sqrt{\frac{B}{\eps}} \:\:\:\:\: \forall \eps > 0.
\end{equation*}
\end{lemma}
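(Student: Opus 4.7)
The plan is to construct an explicit $\epsilon$-net for the class $C[0,1,B]$ in the $L_{2}$ metric, following the piecewise-linear approximation strategy on an adaptive partition. First I would reduce to the convex case: since $L_{2}$ distance is invariant under $f\mapsto -f$ and $|-f|=|f|$, it suffices to bound the covering number of the class $V[0,1,B]$ of convex functions $g:[0,1]\to\R$ with $\sup_{x}|g(x)|\le B$.

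The structural ingredient is a single elementary pointwise inequality. For any convex $g$ and any subinterval $[a,b]$, letting $L$ denote the chord interpolating $(a,g(a))$ and $(b,g(b))$, convexity gives $g\le L$ on $[a,b]$; moreover, using the monotonicity of the right derivative $g'_{+}$, a short calculation yields
\[
0 \;\le\; L(x)-g(x) \;\le\; \frac{(x-a)(b-x)}{b-a}\,\bigl(g'_{+}(b)-g'_{+}(a)\bigr),
\]
so that the squared $L_{2}$ error of the chord on $[a,b]$ is at most $(b-a)^{3}\bigl(g'_{+}(b)-g'_{+}(a)\bigr)^{2}/30$.

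Given $\epsilon>0$, I would choose a partition $0=t_{0}<t_{1}<\cdots<t_{N}=1$ with $N\asymp\sqrt{B/\epsilon}$ that is necessarily adaptive: since a bounded convex function may still have $|g'_{+}|$ blowing up at the endpoints, the cells near $0$ and $1$ must be small enough that the crude oscillation bound $\|g-c\|_{L_{2}(I)}^{2}\le 4B^{2}|I|$ gives an $O(\epsilon^{2})$ contribution, while in the bulk, where $|g'_{+}(x)|\lesssim B/\min(x,1-x)$, the chord bound above can be used with coarser cells. A dyadic-style partition near the endpoints together with a uniform partition in the bulk achieves the balance. Approximating $g$ on this partition by the piecewise-linear interpolation through discretized values of $g(t_{i})$ (rounded to a grid of spacing of order $\epsilon/\sqrt{N}$ in $[-B,B]$) produces a candidate net.

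The main obstacle, and the delicate point of Dryanov's argument, is the counting step. A naive product bound over the $N$ breakpoints with an independent grid of size $O(B\sqrt{N}/\epsilon)$ at each would yield $\log|\mathrm{net}|\sim N\log(B/\epsilon)\asymp\sqrt{B/\epsilon}\,\log(B/\epsilon)$, off by a logarithmic factor from the target. To remove the log one must exploit the convexity constraint: the consecutive slopes of the piecewise linear approximant form a monotone non-decreasing sequence, and the number of monotone integer-valued sequences of length $N$ taking values in a range of $M$ integers is $\binom{N+M}{N}$, whose logarithm is $O(N+M)$ rather than $N\log M$. Balancing $N$ and $M$ in this count, and accounting separately for the (small) choice of constant offset $g(0)$ and the boundary-cell contributions, then delivers the claimed bound $\log N(\epsilon,C[0,1,B],L_{2})\lesssim\sqrt{B/\epsilon}$.
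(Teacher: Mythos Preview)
The paper does not prove this lemma; it is stated with attribution to Dryanov and used as a black box (see the sentence introducing it: ``We also need to use a log covering number bound \ldots\ as proved in \citet{Dryanov}''). So there is no proof in the paper to compare your attempt against.

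That said, your sketch is a faithful outline of Dryanov's actual argument. The two essential ingredients you identify---(i) an adaptive partition, denser near the endpoints, into $N\asymp\sqrt{B/\epsilon}$ cells on which a piecewise-linear interpolant approximates any bounded convex function to $L_{2}$ accuracy $\epsilon$, and (ii) the counting step that exploits monotonicity of successive slopes to replace the naive $N\log M$ bound by $O(N+M)$ via a stars-and-bars count---are precisely the heart of the original proof and the reason the bound carries no logarithmic factor. Your chord error estimate and the observation that $|g'_{+}(x)|\lesssim B/\min(x,1-x)$ forces adaptivity are both correct. If you want to fill in the partition explicitly, Dryanov places breakpoints roughly at $t_{j}\sim (j/N)^{2}$ near $0$ (and symmetrically near $1$); your dyadic refinement near the endpoints achieves the same balance.
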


Modifying the above result we can now prove a log covering number bound for the set of concave sequences bounded by a number $B.$
\begin{lemma}\label{convcov}
Fix $B > 0.$ Let $K_{n,B} = \{\theta \in K_n: \max_{1 \leq i \leq n} |\theta_i| \leq B\}.$ The following is true for any $n \geq 3$ and a universal constant $C,$
\begin{equation*}
\log N(\eps,K_{n,B}) \leq C n^{1/4} \sqrt{\frac{B}{\eps}}  \:\:\:\: \forall \eps > 0.
\end{equation*}
\end{lemma}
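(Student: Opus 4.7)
The plan is to transfer the continuous $L_2$-covering bound of Lemma~\ref{dryanov} to the discrete Euclidean setting by identifying each concave sequence with its piecewise-linear interpolant. To each $\theta \in K_{n,B}$ I would associate the function $g_\theta : [0,1] \to \R$ defined to interpolate $\theta$ linearly on the grid $\{(i-1)/(n-1) : 1 \leq i \leq n\}$. Since the forward differences $\theta_{i+1} - \theta_i$ are nonincreasing, $g_\theta$ is concave on $[0,1]$, and since $\|g_\theta\|_\infty = \max_i |\theta_i| \leq B$, we have $g_\theta \in C[0,1,B]$.

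The crux of the argument is the two-sided metric equivalence
\begin{equation*}
c_1\, n^{-1/2}\,\|\theta - \theta'\| \;\leq\; L_2(g_\theta, g_{\theta'}) \;\leq\; c_2\, n^{-1/2}\,\|\theta - \theta'\|
\end{equation*}
for universal constants $c_1, c_2 > 0$. This follows by explicit integration: on each sub-interval of length $1/(n-1)$, the difference $g_\theta - g_{\theta'}$ is linear between the values $a = \theta_{i-1} - \theta'_{i-1}$ and $b = \theta_i - \theta'_i$, contributing $\frac{1}{3(n-1)}(a^2 + ab + b^2)$ to $\int_0^1 (g_\theta - g_{\theta'})^2 dx$. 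The elementary identities $\tfrac{1}{2}(a^2+b^2) \leq a^2 + ab + b^2 \leq \tfrac{3}{2}(a^2+b^2)$ (both reducing to $(a\pm b)^2 \geq 0$) then yield the claimed equivalence after summation over $i$ and accounting for the double-counting of interior terms.

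With the equivalence in hand, apply Lemma~\ref{dryanov} to obtain an $L_2$-cover of $C[0,1,B]$ of radius $\delta$ satisfying $\log N \leq C\sqrt{B/\delta}$. Since $\{g_\theta : \theta \in K_{n,B}\}$ is a subset of $C[0,1,B]$, the standard subset-covering trick (pick, from each ball of the original cover, one element lying in the subset, whenever possible) produces a $2\delta$-cover of $\{g_\theta : \theta \in K_{n,B}\}$ of the form $\{g_{\theta^{(1)}}, \ldots, g_{\theta^{(N)}}\}$ with the same log-cardinality bound. The lower half of the metric equivalence then upgrades this into a Euclidean cover $\{\theta^{(j)}\}$ of $K_{n,B}$ of radius $C'\delta\sqrt{n}$. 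Choosing $\delta$ so that $C'\delta\sqrt{n} = \eps$ gives
\begin{equation*}
\log N(\eps, K_{n,B}) \;\leq\; C\sqrt{B\sqrt{n}/\eps} \;=\; C\, n^{1/4}\sqrt{B/\eps},
\end{equation*}
which is the target bound.

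The only nontrivial ingredient is the lower half of the metric equivalence, since that is exactly what permits transfer of an $L_2$-cover back to a Euclidean cover at a loss of only a factor $\sqrt{n}$ in the radius. Once the elementary inequality $a^2 + ab + b^2 \geq \tfrac{1}{2}(a^2+b^2)$ is noted, the remaining steps are routine bookkeeping. I do not expect any issue with boundary effects in the telescoping, as the endpoints contribute only lower-order corrections absorbed into the universal constants.
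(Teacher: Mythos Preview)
Your proposal is correct and follows essentially the same route as the paper: embed $K_{n,B}$ into $C[0,1,B]$ via piecewise-linear interpolation, pull back an $L_2$-cover using the subset trick, and convert to a Euclidean cover via the comparison $\tfrac{1}{n}\|\theta-\nu\|^2 \leq C\, L_2(f_\theta,f_\nu)^2$. The only difference is that the paper cites this metric comparison from Lemma~A.4 of \cite{GSvex}, whereas you supply the elementary interval-by-interval computation $\int_0^1 (g_\theta-g_{\theta'})^2 = \tfrac{1}{3(n-1)}\sum (a^2+ab+b^2)$ together with $\tfrac12(a^2+b^2)\le a^2+ab+b^2$; this makes your argument self-contained but is otherwise the same proof.
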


\begin{proof}
Let $\tau = \sqrt{\frac{24}{n}} \eps.$ Let $\F$ be a finite subset of $C[0,1,B]$ such that for any $f \in C[0,1,B]$ there exists $g \in \F$ such that $L_2(f,g) \leq \tau.$ By Lemma~\ref{dryanov} $\F$ can be chosen to have $\log |\F| \leq C \sqrt{B/\tau}.$ Now take any $\theta \in K_{n,B}.$ Define a convex function $f_{\theta} \in C[0,1,B]$ as follows:
Set $f_{\theta}(\frac{i - 1}{n - 1}) = \theta_{i} \:\:\forall\:\: 1 \leq i \leq n$ and extend $f_{\theta}$ to all other points in the unit interval by linear interpolation. Clearly $f_{\theta} \in C[0,1,B].$ For each $g \in \F$ check whether there exists $\nu \in K_{n,B}$ such that $L_2(g,f_{\nu}) \leq \tau.$ If there is, choose such a $\nu$ arbitratrily and name it $\nu_{g}.$ Let $\F^{'}$ be the set of such $\nu_{g}$ obtained as we vary $g \in \F.$ Clearly we then have for a universal constant $C$,
\begin{equation}\label{card}
|\F^{'}| \leq |\F| \leq \exp(C \sqrt{B/\tau}).
\end{equation} 
Now we claim that a $2 \tau$ covering set for $K_{n,B}$ has cardinality atmost $|\F^{'}|$ which will suffice to prove the lemma.

Take any $\theta \in K_{n,B}.$ By definition of $\F$ there exists $g \in \F$ such that $L_2(f_{\theta},g) \leq \tau.$ Therefore for this $g,$ there exists $\nu_g \in \F^{'}$ such that $L_2(g,f_{\nu_g}) \leq \tau.$ Hence by the triangle inequality, we have $L_2(f_{\theta},f_{\nu_g}) \leq 2 \tau.$ Now a direct consequence of Lemma(A.4) in~\citet{GSvex} shows the following for a universal constant $C,$:
\begin{equation*}
\frac{1}{n} \|\theta - \nu_g\|^2 \leq C (L_2(f_{\theta},f_{\nu_g}))^2 \leq 4C \tau^2.
\end{equation*}
Now set $\epsilon = \sqrt{\frac{n}{4C}} \tau$ to conclude that $\F^{'}$ is a $\eps$ cover for $K_{n,B}.$ Setting the value of $\eps$ in~\eqref{card} now finishes the proof of the lemma.
\end{proof}

The space of $n$ dimensional vectors with atmost three concave blocks plays an important role in our analysis. We now define the space of sequences with atmost three concave blocks as follows. For any vector $\theta \in \R^n$ let $\theta_{[a:b]} = (\theta_{a},\dots,\theta_{b})$ for any integers $1 \leq a \leq b \leq n.$ Define
\begin{align*}
K3 = &\{\theta \in \R^n: \theta_{[1:m_1]} \in K_{m_1}, \theta_{[m_1 + 1:m_2 - 1]} \in K_{m_2 - m_1 - 1}, \\&\theta_{[m_2:n]} \in K_{n - m_2 + 1} \:\text{for some integers}\: 0 \leq m_1 < m_2 \leq n + 1\}.
\end{align*}
The interpretation is that if $m_1 = 0$ then the first block is empty. Likewise if $m_2 = n + 1$ then the third block is empty. It is not too difficult to extend Lemma~\ref{convcov} to give a bound on the log covering number of all bounded sequences in $K3$ as shown in our next lemma.
\begin{lemma}\label{3piece}
For every $\eps > 0$ we have the following:
\begin{equation*}
\log N(\eps,\{\theta \in K3 \subset \R^n: \max_{1 \leq i \leq n} |\theta_i| \leq B\}) \leq C n^{1/4} (\frac{B}{\eps})^{1/2} + 2 \log (n + 2).
\end{equation*}
\end{lemma}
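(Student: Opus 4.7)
The plan is to reduce to Lemma~\ref{convcov} by conditioning on the two breakpoints $m_1, m_2$ in the definition of $K3$ and then taking a union bound. There are at most $(n+2)^2$ choices for the pair $(m_1, m_2)$ with $0 \leq m_1 < m_2 \leq n+1$, which will account for the additive $2 \log(n+2)$ term. For each fixed pair, the set of $\theta \in K3$ bounded by $B$ with that specific breakpoint choice is the Cartesian product
\[
K_{m_1,B} \times K_{m_2 - m_1 - 1, B} \times K_{n - m_2 + 1, B},
\]
where each factor is a set of the form handled by Lemma~\ref{convcov} (with the convention that an empty block contributes nothing).

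Next, I would assemble covers of the three blocks at a common per-block accuracy $\eps/\sqrt{3}$, so that by Pythagoras the concatenation covers the whole vector at Euclidean accuracy $\eps$. Writing $n_1 = m_1$, $n_2 = m_2 - m_1 - 1$, $n_3 = n - m_2 + 1$, the log covering number for the fixed-breakpoint piece is bounded, via Lemma~\ref{convcov}, by
\[
\sum_{j=1}^{3} C\, n_j^{1/4} \sqrt{\frac{B \sqrt{3}}{\eps}} \;\leq\; 3 C \cdot 3^{1/4} \cdot n^{1/4} \sqrt{\frac{B}{\eps}},
\]
using $n_j \leq n$ for each $j$. Absorbing constants into $C$, this is at most $C' n^{1/4} \sqrt{B/\eps}$.

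Finally, taking a union over the at most $(n+2)^2$ breakpoint pairs multiplies the covering number by $(n+2)^2$, i.e., adds $2 \log(n+2)$ to the logarithm of the covering number. Combining these two estimates gives
\[
\log N(\eps,\{\theta \in K3: \max_{i} |\theta_i| \leq B\}) \;\leq\; C' n^{1/4} \sqrt{\frac{B}{\eps}} + 2 \log(n+2),
\]
which is the claimed bound. There is no real obstacle here: the only mild point is to make sure that the per-block accuracy is scaled so that the concatenated cover has the right Euclidean accuracy, and to verify that the edge cases $m_1 = 0$ or $m_2 = n+1$ (empty blocks) cause no issue, since an empty block contributes trivially to both the cover and to $\sum n_j^{1/4}$.
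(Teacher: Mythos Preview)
Your proposal is correct and follows essentially the same approach as the paper: fix the breakpoints $(m_1,m_2)$, cover each of the three concave blocks at radius $\eps/\sqrt{3}$ using Lemma~\ref{convcov}, and then take a union over all breakpoint choices. The only cosmetic difference is that the paper counts the breakpoint pairs as $\binom{n+2}{2}$ rather than $(n+2)^2$, but since $\log\binom{n+2}{2}\le 2\log(n+2)$ this leads to the same final bound.
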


\begin{proof}
Fix integers $0 \leq m_1 < m_2 \leq n + 1.$  Define $B_1 = \{1 \leq i \leq m_1\}, B_2 = \{m_1 + 1 \leq i \leq m_2 - 1\}, B_3 = \{m_2 \leq i \leq n\}.$ There are three concave pieces and it suffices to cover each of them separately at radius $\eps/\sqrt{3}$ to get an $\eps$ cover for sequences which are concave on $B_1,B_2$ and $B_3$ separately. Using Lemma~\ref{convcov} on each of these pieces one obtains a log covering number bound which is atmost $3 C n^{1/4} (\frac{B}{\tau})^{1/2}.$ Now there are exactly ${n + 2}\choose{2}$ ways of choosing $m_1$ and $m_2.$ We could cover each of the spaces defined by a fixed $m_1,m_2$ at radius $\eps$ separately and take the union of the covers. That would be a cover for $K_3$ at radius $\eps.$ The log cardinality of this cover is clearly upper bounded by $C n^{1/4} (\frac{B}{\eps})^{1/2} + \log {n + 2 \choose 2}.$ This finishes the proof of the lemma.
\end{proof}

We now embark on proving a key result which gives an upper bound on the function $f_{\theta^*}$ in case $\theta^*$ is a concave monotonic sequence. As mentioned before, since any concave function first increases and then decreases, a critical step is to understand the behaviour of $f_{\theta^*}$ when $\theta^*$ is monotonic, in addition to being concave. Recallling the definition of $f_{\theta^*}$ in~\eqref{gausup}, it is a expected supremum of Gaussian random variables where the supremum is over all $\theta \in K_n$ which also lie within a Euclidean ball around $\theta^*.$ As a first step we are only going to take the supremum over all $\theta \in K_n$ lying within a Euclidean ball around $\theta^*$ and having a maxima at a fixed index $k.$ To explain further, let us define the set $C_k$ of concave sequences with maxima at $k$ as follows:
\begin{equation*}
C_k = \{\theta \in K: \max_{1 \leq i \leq n} \theta_{i} \leq \theta_k\}.
\end{equation*}
As defined $C_k$ is a closed convex cone in $\R^n.$ Also it is clear that $K_n = \cup_{k = 1}^{n} C_k.$
Our next result is a key lemma controlling the expected Gaussian maxima term $\E \big(\sup_{\theta \in C_k: \|\theta - \theta^*\| \leq t} \langle z, \theta - \theta^* \rangle\big)$ where the supremum is taken over the restricted set $C_k$ intersected with a Euclidean ball. Our bound would hold uniformly over $k,$ as a function of $t$ whenever the underlying $\theta^*$ is concave and monotonic(non decreasing or non increasing).

\begin{lemma}\label{key1}
Fix a positive integer $n$ and fix any $1 \leq k \leq n.$ Also fix a non decreasing concave sequence $\theta^* \in K_n.$ For all $t \geq 0$ the following inequality is true for a universal constant $C,$:
\begin{equation*}
\E \big(\sup_{\theta \in C_k: \|\theta - \theta^*\| \leq t} \langle z, \theta - \theta^* \rangle\big) \leq C \sigma \big(V(\theta^*) + \sigma\big)^{1/4} n^{1/8} t^{3/4} + 2 \sigma \sqrt{2 \log(n + 2)} t + \frac{t^2}{8}.
\end{equation*}
\end{lemma}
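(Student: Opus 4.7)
The plan is a truncation-then-chaining argument. Since $V(\theta^*)$, the cone $C_k$, the ball $B(\theta^*,t)$, and $\langle z,\theta-\theta^*\rangle$ are all invariant under adding a common constant to $\theta$ and $\theta^*$, I may assume $\theta^*_1=0$, so that $\theta^*_i\in[0,V]$ with $V\equiv V(\theta^*)$. Fix $M=8\sigma$, and for each $\theta\in C_k$ with $\|\theta-\theta^*\|\le t$ define the coordinatewise clipping
\[
\theta'_i \;=\; \max\bigl(-M,\;\min(\theta_i,\,V+M)\bigr).
\]
The strategy is to split $\langle z,\theta-\theta^*\rangle=\langle z,\theta-\theta'\rangle+\langle z,\theta'-\theta^*\rangle$ and bound each supremum separately: the second via Dudley's chaining on the three-piece class $K3$ using Lemma~\ref{3piece}; the first by using that $\theta-\theta'$ has a sparse support which is moreover a union of a bounded number of intervals.

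I then establish three properties of $\theta'$: (i) $\theta'\in K3$ with $\|\theta'\|_\infty\le V+M$; (ii) $\|\theta'-\theta^*\|\le 2t$; (iii) the support $B(\theta)$ of $\theta-\theta'$ is a union of at most three intervals of total size $|B(\theta)|\le t^2/M^2$. For (i), since $\theta\in C_k$ is concave and unimodal with peak at $k$, the upper clip $\bar\theta=\min(\theta,V+M)$ is concave, and clipping $\bar\theta$ from below at $-M$ only lifts two \emph{end} intervals $[1,a'-1]$ and $[b'+1,n]$ where $\bar\theta<-M$; the resulting shape (\emph{flat $-M$, concave middle, flat $-M$}) lies in $K3$ with cut points $m_1=a'-1$, $m_2=b'+1$. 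For (iii), concavity of $\theta$ forces $\{i:\theta_i>V+M\}$ to be one interval around $k$ and $\{i:\theta_i<-M\}$ to be the complement of an interval containing $k$, so $B(\theta)$ is at most three intervals; the size bound follows from $|\theta_i-\theta^*_i|>M$ on $B(\theta)$ (using $\theta^*_i\in[0,V]$), giving $|B(\theta)|\,M^2\le\sum_{i\in B(\theta)}(\theta_i-\theta^*_i)^2\le t^2$. Property (ii) follows from the case check $|\theta_i-\theta'_i|\le|\theta_i-\theta^*_i|$ and the triangle inequality.

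For the second supremum I invoke Dudley (Theorem~\ref{dudthm}) on $\hat K=\{\theta'\in K3:\|\theta'\|_\infty\le V+M,\,\|\theta'-\theta^*\|\le 2t\}$, whose diameter is at most $4t$. Applying Lemma~\ref{3piece} with $B=V+M$, using $\sqrt{a+b}\le\sqrt a+\sqrt b$, and evaluating $\int_0^{4t}\epsilon^{-1/4}\,d\epsilon\propto t^{3/4}$ gives a bound of a universal constant times $\sigma n^{1/8}(V+\sigma)^{1/4}t^{3/4}+\sigma\sqrt{\log(n+2)}\,t$, where I absorb $M=8\sigma$ via $V+M\le 8(V+\sigma)$. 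For the first supremum, Cauchy--Schwarz on the known support yields $\langle z,\theta-\theta'\rangle\le t\,\|z_{B(\theta)}\|$; as $\theta$ varies, the admissible $B(\theta)$'s form a polynomial family $\mathcal I$ of cardinality $O(n^4)$ (parameterized by the endpoints of three intervals). Gaussian concentration (Theorem~\ref{gaussconc}) applied to the $1$-Lipschitz map $z\mapsto\|z_B\|$ combined with a union bound over $\mathcal I$ gives $\E\max_{B\in\mathcal I}\|z_B\|\le\sigma t/M+C\sigma\sqrt{\log n}$, so $t\cdot\E\max_{B\in\mathcal I}\|z_B\|\le t^2/8+C't\sigma\sqrt{\log n}$ (using $M=8\sigma$). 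Summing the two estimates and absorbing the extra $\sqrt{\log n}$ contribution into the $\sqrt{\log(n+2)}\,t$ term completes the proof.

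The main obstacle is designing $\theta'$ so that both (i) it lies in $K3$ with only three concave pieces (which permits a direct application of Lemma~\ref{3piece} with no extra $\log n$ factor inside the entropy bound) \emph{and} (iii) the support of $\theta-\theta'$ is a union of only a \emph{bounded} number of intervals (which replaces the usual $\binom{n}{m}$ union-bound factor over arbitrary sparse supports by a polynomial factor, so only an $O(\sqrt{\log n})$ term is paid rather than an $O(\sqrt{\log(n/m)}\cdot t/M)$ term that would leak into the $(V+\sigma)^{1/4}$ coefficient and spoil the $n^{-4/5}$ rate). Both rely crucially on $\theta$ being concave with single peak $k$ and on $\theta^*$ being monotone (hence contained in a small interval), which is exactly the hypothesis of the lemma; once the right truncation is in place, the remaining entropy and concentration computations are standard.
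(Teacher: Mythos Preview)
Your proposal is correct and follows essentially the same truncation-then-chaining scheme as the paper: both clip $\theta$ into the bounded three-piece class $K3$ and then control the chained term via Dudley and Lemma~\ref{3piece}. The one substantive difference is in how the residual $\langle z,\theta-\theta'\rangle$ is handled. The paper uses a dyadic peeling on the levels $\{2^jL<\theta^*_1-\theta_i\le 2^{j+1}L\}$ (and the analogous upper levels) and exploits that at each level the support sits inside \emph{fixed} intervals $\{1,\dots,v_j\}\cup\{k-v_j+1,\dots,k+v_j-1\}\cup\{n-v_j+1,\dots,n\}$ anchored at $1$, $k$, $n$; no union bound over supports is needed, and this piece contributes exactly $t^2/8$ with $L=128\sigma$. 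Your Cauchy--Schwarz route bounds by $t\,\|z_{B(\theta)}\|$ and then pays a union bound over the polynomially many admissible supports; this yields the same $t^2/8$ (with the smaller clip level $M=8\sigma$) plus an additional $C\sigma t\sqrt{\log n}$. Since the Dudley part already carries a $\sigma t\sqrt{\log(n+2)}$ term this extra contribution is harmless downstream, though it means your constant in front of $\sigma\sqrt{\log(n+2)}\,t$ is a generic universal $C$ rather than the stated $2\sqrt{2}$. Two minor tightenings: clipping into $[-M,V+M]$ is a contraction toward each $\theta^*_i\in[0,V]$, so in fact $\|\theta'-\theta^*\|\le t$ (not merely $2t$), matching the paper's $A'$; and your $O(n^4)$ count is justified precisely because the three intervals are anchored at $1$, $k$, and $n$, leaving only four free endpoints.
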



\begin{remark}
The conclusion for Lemma~\ref{key1} works even when $\theta^*$ is concave and non increasing by reasons of symmetry.
\end{remark}

\begin{proof}
Let $A = \{\theta \in C_k: \|\theta - \theta^*\| \leq t\}.$ 
Let us define $A^{'}$ as follows:
$$A^{'} = \{\theta \in K3 \subset \R^n: \max_{1 \leq i \leq n} \theta_i \leq \theta^*_{n} + L, \min_{1 \leq i \leq n} \theta_i \geq \theta^*_{1} - L, \|\theta - \theta^*\| \leq t\}$$ where $L$ is a fixed positive number to be chosen later.

For any $\theta \in A$ we will define a truncated version of $\theta$ belonging in $A^{'}$ which will be denoted by $\theta^{'}.$ Then we will have the inequality
\begin{equation}\label{eqbrk}
\E \sup_{\theta \in A} \langle z, \theta - \theta^{*} \rangle \leq \E \sup_{\theta \in A} \langle z, \theta - \theta^{'} \rangle + \E \sup_{\theta^{'} \in A^{'}} \langle z, \theta^{'} - \theta^{*} \rangle.
\end{equation}

Fix an arbitrary $\theta \in A.$ Let us denote $S_1 = \{i: \theta_i < \theta^*_{1} - L\}$ and $S_2 = \{i: \theta_i > \theta^*_{n} + L\}.$ 

We now define $\theta^{'}$ as follows for each $1 \leq i \leq n$:
\begin{equation*}
\theta^{'}_i = \theta_{i} + (\theta^*_{1} - L)\:\:\I\{i \in S_1\} + (\theta^*_{n} + L)\:\:\I\{i \in S_2\}
\end{equation*}
where $\I$ is the usual indicator function. 
\begin{figure}
\label{fig:lemma4p2}
\begin{center}
\begin{tikzpicture}[thick, scale=2.5]
\draw [black] plot[smooth] coordinates {(-2.000000, 0.250000) (-1.800000, 0.630000) (-1.600000, 0.970000) (-1.400000, 1.270000) (-1.200000, 1.530000) (-1.000000, 1.750000) (-0.800000, 1.930000) (-0.600000, 2.070000) (-0.400000, 2.170000) (-0.200000, 2.230000) (0.000000, 2.250000) (0.200000, 2.230000) (0.400000, 2.170000) (0.600000, 2.070000) (0.800000, 1.930000) (1.000000, 1.750000) (1.200000, 1.530000) (1.400000, 1.270000) (1.600000, 0.970000) (1.800000, 0.630000) (2.000000, 0.250000) };

\draw [red] plot coordinates {(-2.000000, 0.630000) (-1.800000, 0.630000) (-1.600000, 0.970000) (-1.400000, 1.270000) (-1.200000, 1.530000) (-1.000000, 1.750000) (-0.800000, 1.930000) (-0.600000, 1.930000) (-0.400000, 1.930000) (-0.200000, 1.930000) (0.000000, 1.930000) (0.200000, 1.930000) (0.400000, 1.930000) (0.600000, 1.930000) (0.800000, 1.930000) (1.000000, 1.750000) (1.200000, 1.530000) (1.400000, 1.270000) (1.600000, 0.970000) (1.800000, 0.630000) (2.000000, 0.630000) };

\draw [dotted] plot coordinates {(-1.800000, 0.630000) (1.800000, 0.630000) };

\draw [dotted] plot coordinates {(-2.000000, 1.930000) (-0.800000, 1.930000) };

\draw [dotted] plot coordinates {(0.800000, 1.930000) (2.000000, 1.930000) };

\draw [blue] plot coordinates {(-2.000000, 0.830000) (-1.800000, 1.324352) (-1.600000, 1.397862) (-1.400000, 1.445830) (-1.200000, 1.482302) (-1.000000, 1.512072) (-0.800000, 1.537403) (-0.600000, 1.559552) (-0.400000, 1.579298) (-0.200000, 1.597158) (0.000000, 1.613496) (0.200000, 1.628574) (0.400000, 1.642592) (0.600000, 1.655706) (0.800000, 1.668035) (1.000000, 1.679679) (1.200000, 1.690717) (1.400000, 1.701217) (1.600000, 1.711234) (1.800000, 1.720814) (2.000000, 1.730000) };

\node[above] at (0,2.27) {$\theta$};

\node[above] at (0,1.95) {$\theta'$};

\node[above] at (0,1.60) {$\theta^*$};

\node[right] at (2, 1.92) {$\theta_n^*+L$};

\node[right] at (2, 1.7) {$\theta_n^*$};

\node[left] at (-2, 0.84) {$\theta_1^*$};

\node[left] at (-2, 0.63) {$\theta_1^*-L$};

\draw [dotted] plot coordinates {(0.800000, 0.100000) (0.800000, 1.930000) };

\draw [dotted] plot coordinates {(-0.800000, 0.100000) (-0.800000, 1.930000) };

\draw [dotted] plot coordinates {(-1.800000, 0.100000) (-1.800000, 0.630000) };

\draw [dotted] plot coordinates {(1.800000, 0.100000) (1.800000, 0.630000) };

\draw [gray] (-.15,.1) -- (-0.8, .1);
\draw [gray] (.15,.1) -- (0.8, .1);
\node at (0,.1) {\footnotesize $S_2$};

\node at (-1.9,.1) {\footnotesize $S_1^L$};

\node at (1.9,.1) {\footnotesize $S_1^R$};

\end{tikzpicture}

\caption{The proof of Lemma~\ref{key1} is based on
a truncation $\theta'$ (in red) of an arbitrary concave sequence $\theta\in C_k$ (in black)
with respect to a fixed monotone increasing and concave sequence $\theta^*$ (in blue). The
tails of $\theta$ are raised to $\theta_1^*-L$
over $S_1^L\cup S_1^R = S_1 = \{i : \theta_i < \theta_1^* - L\}$.
In the interval $S_2 = \{i : \theta_i > \theta_n^* + L\}$ around the
mode $k$, the sequence is lowered to the level
$\theta_n^* +L$.}
\end{center}
\end{figure}
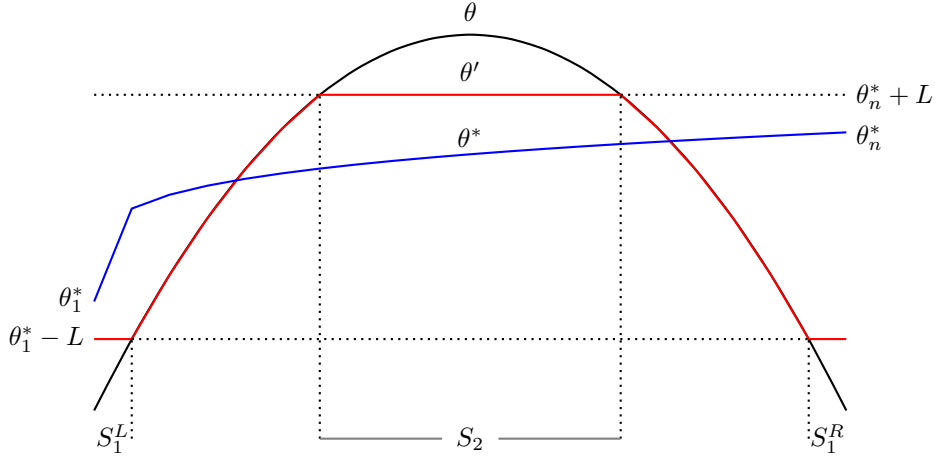

From the above definition, it is clear that $\min_{1 \leq i \leq n} \theta^{'}_{1} \geq \theta^{*}_{1} - L, \max_{1 \leq i \leq n} \theta^{'}_{i} \leq \theta^{*}_{n} + L.$ Also by construction of $\theta^{'},$ we have the following contractive property for any $1 \leq i \leq n$:
\begin{equation*}\label{contrac}
|\theta_i - \theta^{'}_i| \leq |\theta_i - \theta^{*}_i|.
\end{equation*}
Now by concavity of $\theta$, the set $S_1$ is necessarily a union of atmost two intervals of the form $\{1 \leq i \leq m_1\} \cup \{m_2 \leq i \leq n\}$ for some integers $0 \leq m_1 < m_2 \leq n + 1.$ If $m_1 = 0$ then the set $\{1 \leq i \leq m_1\}$ is empty and similarly if $m_2 = n + 1$ then the set $\{m_2 \leq i \leq n\}$ is empty. As defined, $\theta^{'}$ is a constant vector on the two intervals in $S_1$ and remains a concave vector on the complement of $S_1$ which is an interval. Hence, $\theta^{'} \in K3.$ Combining these properties of $\theta^{'}$ imply that $\theta^{'} \in A^{'}.$  

We now proceed to control the first term on the right side of the inequality in~\eqref{eqbrk}.
We have the following inequality by definition of $\theta^{'}$:
\begin{align}\label{eqtrunc}
\begin{split}
\sum_{i = 1}^{n} z_i (\theta_i - \theta'_i) 
& = \sum_{i\in S_1} z_i (\theta_i - \theta'_i) + \sum_{i\in S_2} z_i (\theta_i - \theta'_i)\\
& \leq \sum_{i\in S_1} |z_i| (\theta_i' - \theta_i) + \sum_{i\in S_2} |z_i| (\theta_i - \theta'_i)\\
& = \sum_{i \in S_1} \sum_{j = 0}^{\infty} |z_i| (\theta_i' - \theta_i) \:\:\I\{2^j L < \theta^{*}_1 - \theta_i \leq 2^{j + 1} L\}
\\
&\qquad +  \sum_{i \in S_2} \sum_{j = 0}^{\infty} |z_i| (\theta_i -
\theta'_i) \:\:\I\{2^j L < \theta_i - \theta^*_{n} \leq 2^{j + 1} L\}
 \\
& \leq \sum_{j = 0}^{\infty} 2^{j + 1} L \sum_{i \in S_1}
 |z_i|\:\:\I\{2^j L < \theta^{*}_1 - \theta_i \leq 2^{j + 1} L\} 
\\
& \qquad 
+  \sum_{j = 0}^{\infty} 2^{j + 1} L \sum_{i \in S_2} |z_i|\:\:\I\{2^j L < \theta_i - \theta^*_{n} \leq 2^{j + 1} L\}
\end{split}
\end{align}
where $\I$ denotes the indicator function and the last inequality follows from the inequalities
\begin{align}
\theta_i' - \theta_i \leq \theta_1^* - \theta_i & \;\; \text{for $i\in S_1$}\\[5pt]
\theta_i - \theta_i' \leq \theta_i - \theta_n^* & \;\; \text{for $i\in S_2$}.
\end{align}
Fix a non negative integer $j.$ We now note that for any $\theta \in A$ since $\|\theta - \theta^{*}\| \leq t$ we have 
\begin{align}\label{l2con}
\left| \bigl\{ i : 2^j L < |\theta_i - \theta_i^*| \bigr\}\right| \leq \frac{t^2}{2^{2j} L^2} \equiv v_j.
\end{align}

We now make some further observations. Since $\theta$ is concave, any set of the form $\{i: \theta_i < a\}$ for some number $a$ is necessarily atmost a union of two intervals. One interval, if non empty, has to contain the index $1$ and the other interval, if non empty has to contain the index $n.$ Hence there exists integers $0 \leq w_1 < w_2 \leq n + 1$ such that the following holds:
\begin{align*}
&\I\{i \in S_1: 2^j L < \theta^{*}_1 - \theta_i \leq 2^{j + 1} L\} \leq \I\{i \in S_1: \theta_i < \theta^{*}_i - 2^j L\} \leq \\& \I\{1 \leq i \leq w_1\} + \I\{w_2 \leq i \leq n\}.
\end{align*}
Also we have
\begin{equation*}
\sum_{i = 1}^{n} \I\{i \in S_1: 2^j L < \theta^{*}_1 - \theta_i \leq 2^{j + 1} L\} \leq \sum_{i = 1}^{n} \I\{i \in S_1: 2^j L < \theta^{*}_i - \theta_i\} \leq v_j
\end{equation*}
where the first inequality is because $\theta^*$ is non decreasing and the second inequality is due to~\eqref{l2con}. The last two displays imply the following inequality:
\begin{equation}\label{1uni}
\I\{i \in S_1: 2^j L < \theta^{*}_1 - \theta_i \leq 2^{j + 1} L\} \leq \I\{1 \leq i \leq v_j\} + \I\{n - v_j < i \leq n\}.
\end{equation}

\begin{figure}
\begin{center}
\begin{tabular}{cc}
\qquad\qquad 
\begin{tikzpicture}[thick, scale=4]
\draw [black] plot[smooth] coordinates {(-1.000000, 1.750000) (-0.800000, 1.930000) (-0.600000, 2.070000) (-0.400000, 2.170000) (-0.200000, 2.230000) (0.000000, 2.250000) (0.200000, 2.230000) (0.400000, 2.170000) (0.600000, 2.070000) (0.800000, 1.930000) (1.000000, 1.750000)};
\draw [red] plot coordinates { (-1.000000, 1.750000) (-0.800000, 1.930000) (-0.600000, 1.930000) (-0.400000, 1.930000) (-0.200000, 1.930000) (0.000000, 1.930000) (0.200000, 1.930000) (0.400000, 1.930000) (0.600000, 1.930000) (0.800000, 1.930000) (1.000000, 1.750000) };
\draw [dotted] plot coordinates {(-0.9, 2.17) (0.9, 2.17) };
\draw [blue] plot coordinates {(-1.000000, 1.512072) (-0.800000, 1.537403) (-0.600000, 1.559552) (-0.400000, 1.579298) (-0.200000, 1.597158) (0.000000, 1.613496) (0.200000, 1.628574) (0.400000, 1.642592) (0.600000, 1.655706) (0.800000, 1.668035) (1.000000, 1.679679)};
\node[above] at (0,2.27) {$\theta$};
\node[above] at (0,1.95) {$\theta'$};
\node[above] at (0,1.6) {$\theta^*$};
\node[right] at (0.9, 2.17) {$\theta_n^*+2^jL$};
\node[right] at (0.9, 1.93) {$\theta_n^*+L$};
\draw [dotted] plot coordinates {(0.400000,1.4) (0.400000, 2.17) };
\draw [dotted] plot coordinates {(-0.400000,1.4) (-0.400000, 2.17) };
\node at (0,1.3) {\footnotesize $\leq 2v_j$};
\end{tikzpicture}
\end{tabular}
\caption{The set $\bigl\{ i \in S_2 : 2^j L < \theta_i - \theta^*_n\bigr\}$
if it is nonempty, is an interval of length no greater than
$2v_j.$ The figure above indicates this set. Similarly, the set $\bigl\{i \in S_1: 2^j L < \theta^*_1 - \theta_i \bigr\}$ is
the union of at most two intervals. Each has size no larger than
$v_j$.}
\end{center}
\end{figure}
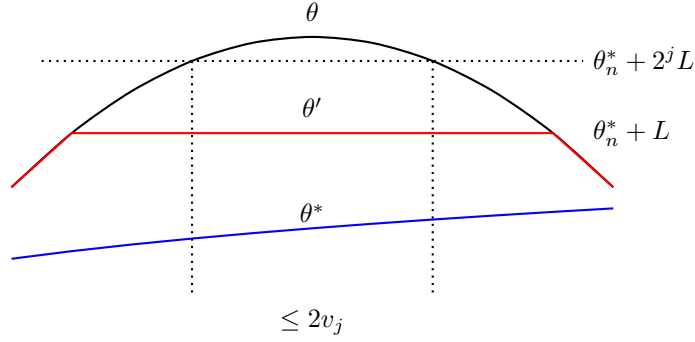

Similarly, by concavity of $\theta$ and since $\theta \in C_k$ any set of the form $\{i: \theta_i > a\}$ for some number $a$ is necessarily an interval containing $k$ if it is non empty. Hence there exists integers $1 \leq w_3 \leq k \leq w_4 \leq n$ such that the following holds:
\begin{align*}
&\I\{i \in S_2: 2^j L < \theta_i - \theta^{*}_n \leq 2^{j + 1} L\} \leq \I\{i \in S_2: \theta_i > \theta^{*}_n + 2^j L\} \leq \\& \I\{w_3 \leq i \leq w_4\}.
\end{align*}
Also we have
\begin{equation*}
\sum_{i = 1}^{n} \I\{i \in S_2: 2^j L < \theta_i - \theta^{*}_n \leq 2^{j + 1} L\} \leq \sum_{i = 1}^{n} \I\{i \in S_2: 2^j L < \theta_i - \theta^{*}_i\} \leq v_j
\end{equation*}
where the first inequality is because $\theta^*$ is non decreasing and the second inequality is due to~\eqref{l2con}. The last two displays imply the following inequality:
\begin{equation}\label{2uni}
\I\{i \in S_2: 2^j L \leq \theta_i - \theta^*_{n} \leq 2^{j + 1} L\} \leq \I\{k - v_j < i < k + v_j\}.
\end{equation}
Now using the inequalities~\eqref{1uni} and~\eqref{2uni} and applying them in~\eqref{eqtrunc} we obtain 
\begin{equation*}
\sum_{i = 1}^{n} (\theta_i - \theta^{'}_i) z_i \leq \sum_{j = 0}^{\infty} 2^{j + 1} L \left(\sum_{i = 1}^{v_j} |z_i| + \sum_{i = k - v_j + 1}^{k + v_j - 1} |z_i| + \sum_{i = n - v_j + 1}^{n} |z_i|\right).
\end{equation*}
Note that in our argument $\theta$ is an arbitrary element in $A$ and the upper bound in the previous inequality does not depend on the choice of $\theta.$ Therefore, using the fact $\E |z_i| = \sigma \sqrt{2/\pi} \leq \sigma$ alongwith~\eqref{l2con}, the last display gives us
\begin{align*}
&\E \sup_{\theta \in A} \sum_{i = 1}^{n} (\theta_i - \theta^{'}_i) z_i \leq \sum_{j = 0}^{\infty} 2^{j + 1} L \sigma (4 v_j) \leq \frac{ 4 t^2 \sigma}{L} \sum_{j = 0}^{\infty} 2^{j + 1 - 2j} =  \frac{16 t^2 \sigma}{L}.
\end{align*}
We now set $L = 128 \sigma$ to finally get
\begin{equation}\label{app2}
\E \sup_{\theta \in A} \langle z,\theta - \theta^{'} \rangle \leq \frac{t^2}{8}
\end{equation}



Now we come to controlling the second term in the right side of~\eqref{eqbrk}. Setting $\F = A^{'}$ in the chaining result in Theorem~\ref{dudthm} we get the upper bound 
\begin{equation}\label{dudcal}
\E \sup_{\theta^{'} \in A^{'}} \langle z, \theta^{'} - \theta^{*} \rangle \leq 12 \sigma \int_{0}^{2t} \sqrt{\log N(\eps,A^{'}})\;d\eps .  
\end{equation}
The upper limit of the integral is the diameter of $A^{'}$ which is atmost $2t.$ By definition of $A^{'}$ we can now apply Lemma~\ref{3piece} with $B =  \frac{V(\theta^*)}{2} + L$ to obtain $$\log N(\eps,A^{'}) \leq C n^{1/4} (\frac{B}{\eps})^{1/2} + 2 \log (n + 2).$$ Using~\eqref{dudcal} and integrating the above expression gives us 
\begin{equation*}
\E \sup_{\theta^{'} \in A^{'}} \langle z, \theta^{'} - \theta^{*} \rangle \leq C \sigma \big(\frac{V(\theta^*)}{2} + L\big)^{1/4} n^{1/8} t^{3/4} + 2 \sigma \sqrt{2 \log(n + 2)} t
\end{equation*}
where we have also used the elementary inequality $\sqrt{a + b} \leq \sqrt{a} + \sqrt{b}$ for any two positive numbers $a,b.$ Combining the last display with~\eqref{app2} and~\eqref{eqbrk} finishes the proof of the lemma.

\end{proof}

Our next step is to extend Lemma~\ref{key1} to the case when the supremum of the Gaussian inner products are taken over all of $K_n$ (not just $C_k$) intersected with a Euclidean ball. This result is presented in our next lemma.
\begin{lemma}\label{key2}
Fix a positive integer $n.$ Also fix a concave sequence $\theta^*$ which is non decreasing sequence or non increasing. For all $t \geq 0$ the following inequality is true for a universal constant $C,$:
\begin{equation*}
\E \big(\sup_{\theta \in K_n: \|\theta - \theta^*\| \leq t} \langle z, \theta - \theta^* \rangle\big) \leq C \sigma \big\{V(\theta^*) + \sigma \big)^{1/4} n^{1/8} t^{3/4} + \sigma \sqrt{\log n} t\big\} + \frac{t^2}{8}.
\end{equation*}
\end{lemma}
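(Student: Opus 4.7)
The plan is to deduce Lemma~\ref{key2} from Lemma~\ref{key1} by a union-bound argument over the index $k$ of the mode of the concave sequence, combined with the Gaussian concentration inequality (Theorem~\ref{gaussconc}) to control deviations.

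First I would observe that $K_n = \bigcup_{k=1}^n C_k$, since every concave sequence achieves its maximum at some index $k$. Consequently, writing $A_k = \{\theta \in C_k : \|\theta - \theta^*\| \leq t\}$ and $M_k = \sup_{\theta \in A_k} \langle z, \theta - \theta^* \rangle$, we have
\begin{equation*}
\sup_{\theta \in K_n : \|\theta - \theta^*\| \leq t} \langle z, \theta - \theta^* \rangle = \max_{1 \leq k \leq n} M_k,
\end{equation*}
so the problem reduces to controlling $\E \max_{1 \leq k \leq n} M_k$. By Lemma~\ref{key1} (and the remark noting that symmetry handles the non-increasing case), every $M_k$ satisfies
\begin{equation*}
\E M_k \leq C \sigma \bigl(V(\theta^*) + \sigma\bigr)^{1/4} n^{1/8} t^{3/4} + 2 \sigma \sqrt{2 \log(n + 2)}\, t + \tfrac{t^2}{8} =: \Psi(t).
\end{equation*}

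Next, I would verify that the map $z \mapsto M_k(z)$ is $t$-Lipschitz: for any $z,z'\in\R^n$, since every $\theta \in A_k$ satisfies $\|\theta - \theta^*\| \leq t$,
\begin{equation*}
|M_k(z) - M_k(z')| \leq \sup_{\theta \in A_k} |\langle z - z', \theta - \theta^* \rangle| \leq t\, \|z - z'\|.
\end{equation*}
Theorem~\ref{gaussconc} then implies that each centered variable $M_k - \E M_k$ is sub-Gaussian with proxy $\sigma t$. A standard maximal inequality for $n$ sub-Gaussian variables yields
\begin{equation*}
\E \max_{1 \leq k \leq n} (M_k - \E M_k) \leq C \sigma t \sqrt{\log n}
\end{equation*}
for a universal constant $C$ (obtained by integrating the tail bound $P(M_k - \E M_k > \sigma t u) \leq e^{-u^2/2}$ and taking a union bound over $k$).

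Combining the two displays gives
\begin{equation*}
\E \max_{1 \leq k \leq n} M_k \leq \Psi(t) + C \sigma t \sqrt{\log n},
\end{equation*}
which after absorbing $\sqrt{\log(n+2)}$ into $\sqrt{\log n}$ (for $n \geq 3$) yields the claimed bound. There is no serious obstacle here; the only subtlety is confirming that each $M_k$ really is $t$-Lipschitz in $z$ (so that the Gaussian concentration inequality applies with constant $t$, not with the unbounded diameter of $C_k$ itself) — which is immediate from the Euclidean ball constraint in the definition of $A_k$. Everything else is a transparent application of Lemma~\ref{key1} plus the sub-Gaussian maximal inequality.
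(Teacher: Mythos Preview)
Your proposal is correct and follows essentially the same approach as the paper: decompose $K_n=\bigcup_k C_k$, use the $t$-Lipschitz property of each $M_k$ together with Gaussian concentration to pass from $\max_k \E M_k$ to $\E\max_k M_k$ at the cost of $C\sigma t\sqrt{\log n}$, and then invoke Lemma~\ref{key1} for the uniform bound on $\E M_k$. The paper packages the Lipschitz check and the sub-Gaussian maximal inequality as Lemmas~\ref{gaulip} and~\ref{subg}, but the argument is otherwise identical to yours.
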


\begin{proof}
We prove the lemma when $\theta^*$ is a concave non decreasing sequence. The proof when $\theta^*$ is concave non increasing is analogous. For each $1 \leq k \leq n$ and $t > 0$ define the random variables
\begin{equation*}
X_k(t) = \sup_{\theta \in \C_k: \|\theta - \theta^*\| \leq t} \langle z, \theta - \theta^* \rangle.
\end{equation*}

We first note that 
\begin{equation*}
\sup_{\theta \in K_n: \|\theta - \theta^*\| \leq t} \langle z, \theta - \theta^* \rangle = \max_{1 \leq k \leq n} X_k(t).
\end{equation*}
Applying Lemma~\ref{gaulip} (see appendix) we see that the random variables $X_k(t)$ are lipschitz functions of $z$ with lipschitz constant $t.$ Hence using the Gaussian Concentration Theorem~\ref{gaussconc} for lipschitz functions we get for all $x > 0$ and all $1 \leq k \leq n,$
\begin{equation*}
P\big(X_k(t) \leq \E X_k(t) + t \sigma x\big) \leq \exp(-\frac{x^2}{2}).
\end{equation*}
A standard argument involving maxima of random variables with Gaussian like tails is given in Lemma~\ref{subg} for the sake of completeness. Using this lemma and the last display we finally get for a universal constant $C,$
\begin{equation*}
\E \max_{1 \leq k \leq n} X_k(t) \leq \max_{1 \leq k \leq n} \E X_k(t) + C \sigma t \sqrt{\log n}.
\end{equation*}
Now Lemma~\ref{key1} gives us an upper bound on the term $\max_{1 \leq k \leq n} E X_k(t)$ because the upper bound in Lemma~\ref{key1} does not depend on $k.$ Using this upper bound alongwith the last display finishes the proof of the lemma. 
\end{proof}

We are now finally ready to prove Proposition~\ref{key}. The main idea is to use the fact that any concave sequence first increases and then decreases and use Lemma~\ref{key2}.

\begin{proof}[Proof of Proposition~\ref{key}]
Let $1 \leq i^* \leq n$ be such that $\theta^* \in C_{i^*}.$ Let $\theta^* = (\theta^*_1,\theta^*_2)$ where $\theta^*_1$ is an $i^*$ dimensional vector and $\theta^*_2$ is an $n - i^*$ dimensional vector. Similarly, let $z = (z_1,z_2).$ We can write
\begin{equation*}
\E \big(\sup_{\theta \in K_n: \|\theta - \theta^*\| \leq t} \langle z, \theta - \theta^* \rangle\big) \leq \E \big(\sup_{\theta \in \K_{i^*}: \|\theta - \theta^*_1\| \leq t} \langle z_1, \theta - \theta^*_1 \rangle\big) +  \E \big(\sup_{\theta \in \K_{n - i^*}: \|\theta - \theta^*_2\| \leq t} \langle z_2, \theta - \theta^*_2 \rangle\big).
\end{equation*}

There are two terms on the right side of the above inequality. Let us first bound the first term on the right side. The second term can be bounded exactly in the same way. Since $\theta_1^{*}$ is non decreasing, we can use Lemma~\ref{key2} to obtain for all $t \geq 0$:
\begin{equation*}
\E \big(\sup_{\theta \in \K_{i^*}: \|\theta - \theta^*_1\| \leq t} \langle z_1, \theta - \theta^*_1 \rangle\big) \leq C \sigma \big\{\big(V(\theta^*) + \sigma\big)^{1/4} n^{1/8} t^{3/4} + \sqrt{\log n} t\big\} + \frac{t^2}{8}.
\end{equation*}
where we also use the fact that $(i^*)^{1/8} \leq n^{1/8}.$ We upper bound the second term by using Lemma~\ref{key2} exactly similarly. We then obtain for an appropriate universal constant $C,$ 
\begin{equation*}\label{eqstep}
f_{\theta^*}(t) = \E \big(\sup_{\theta \in K_n: \|\theta - \theta^*\| \leq t} \langle z, \theta - \theta^* \rangle\big) \leq C \sigma \big\{\big(V(\theta^*) + \sigma \big)^{1/4} n^{1/8} t^{3/4} + \sqrt{\log n} t\big\} + \frac{t^2}{4}.
\end{equation*}
This finishes the proof.
\end{proof}

\section{Model Misspecification}\label{secmisspec}
Our risk analysis actually extends to the case of model misspecification. A natural quantity in the misspecified case for measuring the performance of an estimator $\tilde{\theta}$ is to control what is called the regret,  defined as $\|\tilde{\theta} - \theta^*\|^2 - \min_{\theta \in K_n} \|\theta - \theta^*\|^2.$ The goal of this section is to prove the next theorem upper bounding the regret of the LSE. This theorem gives an oracle risk bound generalizing Theorem~\ref{main} to the case when the true underlying sequence $\theta^*$ is not necessarily concave. Recall that $\Pi_{C}$ denotes the projection operator to any closed convex cone $C$ and for any vector $\theta \in \R^n$ the range of the vector is denoted by $V(\theta) = \max_{1 \leq i \leq n} \theta_i - \min_{1 \leq i \leq n} \theta_i.$

\begin{theorem}\label{misspec}
Fix any positive integer $n.$ Fix any $\theta^* \in \R^n.$ Let $K = K_n$ for simplicity. Let $\mu^* = (I - P_{L}) \theta^*.$ Let $H(\theta^*) = V(\Pi_{K}(\mu^*)).$ Fix any $x > 0.$ The following upper bound on the risk holds for a universal constant $C$ with probability greater than $1 - \exp(-x) - \exp(-x^2/16):$
\begin{align*}\label{highp}
\begin{split}
\frac{1}{n}\|\hat{\theta} - \theta^*\|^2 \leq \:\:&\frac{1}{n}\|\Pi_{K \cap L^{\perp}}(\mu^*) - \mu^*\|^2 + \frac{1}{n} \sigma^2(2 + 17x) \:+ \\& C \sigma^{8/5} \big(H(\theta^*) + \sigma \big)^{2/5} n^{-4/5}.
\end{split}
\end{align*}
Moreover, the above high probability risk bound also immediately implies a risk bound in expectation for a universal constant $C$:
\begin{equation*}\label{expbd}
R(\hat{\theta},\theta^*) \leq \frac{1}{n}\|\Pi_{K \cap L^{\perp}}(\mu^*) - \mu^*\|^2 + C \sigma^{8/5} \big(H(\theta^*) + \sigma \big)^{2/5} n^{-4/5} + \frac{2 \sigma^2}{n}.
\end{equation*}
\end{theorem}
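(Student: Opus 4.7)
The plan is to mimic the proof of Theorem~\ref{main}, replacing Lemma~\ref{tech} with its misspecified counterpart alluded to in the remark after Lemma~\ref{tech} (the full statement of Theorem~12 in~\cite{bellec2015sharp}).

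First I would redo the orthogonal decomposition from the proof of Theorem~\ref{main}. Its only place of reliance on $\theta^*\in K$ was the invocation of Lemma~\ref{tech}; the projection identity itself does not use $\theta^*\in K$. Because $K = L + (K\cap L^{\perp})$ with $L\perp K\cap L^{\perp}$, we still have $\Pi_K(y) = P_L y + \Pi_{K\cap L^{\perp}}(y)$ for any $y$. Writing $y = P_L\theta^* + \mu^* + z$ and using orthogonality of the $L$ and $L^{\perp}$ components,
\begin{equation*}
\|\hat\theta - \theta^*\|^2 \;=\; \|P_L z\|^2 + \|\Pi_{K\cap L^{\perp}}(\mu^* + z) - \mu^*\|^2,
\end{equation*}
exactly as in the well-specified case. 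Only the second term requires a new bound, since $\mu^*$ need no longer lie in $K\cap L^{\perp}$.

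To bound the second term I would invoke the misspecified Bellec fixed-point recipe: let $\bar\mu^* = \Pi_{K\cap L^{\perp}}(\mu^*)$. The full version of Theorem~12 in~\cite{bellec2015sharp}, applied to the closed convex cone $K\cap L^{\perp}$ and target $\mu^*$, states that whenever $s>0$ satisfies $f_{\bar\mu^*}(s)\le s^2/2$,
\begin{equation*}
\|\Pi_{K\cap L^{\perp}}(\mu^* + z) - \mu^*\|^2 \;\le\; \|\bar\mu^* - \mu^*\|^2 + 2\max\{s^2,\,8\sigma^2 x\}
\end{equation*}
with probability at least $1-\exp(-x)$. To choose $s$, note that $\bar\mu^*\in K\cap L^{\perp}\subset K_n$, so Proposition~\ref{key} applies to it and supplies exactly the same pointwise inequality on $f_{\bar\mu^*}$ as was used in the proof of Theorem~\ref{main}, but with $V(\bar\mu^*)$ in place of $V(\mu^*)$. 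The same algebraic choice $s = \{C\sigma(V(\bar\mu^*)+\sigma)^{1/4}n^{1/8}\}^{4/5}$ then ensures $f_{\bar\mu^*}(s)\le s^2/2$ and yields $s^2\le C'\sigma^{8/5}(V(\bar\mu^*)+\sigma)^{2/5}n^{1/5}$.

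The one small additional observation needed is that $V(\bar\mu^*) = H(\theta^*)$, i.e.\ $\bar\mu^* = \Pi_K(\mu^*)$. Since $\mu^*\in L^{\perp}$ and any $\theta\in K$ splits orthogonally as $\ell+\kappa$ with $\ell\in L$, $\kappa\in K\cap L^{\perp}$, we have $\|\mu^*-\theta\|^2 = \|\ell\|^2 + \|\mu^*-\kappa\|^2$, so the minimizer over $K$ takes $\ell=0$ and $\kappa=\Pi_{K\cap L^{\perp}}(\mu^*)$; hence $\Pi_K(\mu^*)=\bar\mu^*$ and $V(\Pi_K(\mu^*))=V(\bar\mu^*)$. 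Combining the displayed bound with the $\chi^2_2$ tail $\|P_L z\|^2\le \sigma^2(2+x)$ (which holds with probability $\ge 1-\exp(-x^2/16)$), substituting the value of $s^2$, applying a union bound, and dividing by $n$ yields the high-probability inequality in the statement; the $16\sigma^2 x/n$ coming out of $\max\{s^2,8\sigma^2 x\}$ combines with the $\sigma^2 x/n$ from the $\chi^2$ tail to produce the advertised $\sigma^2(2+17x)/n$ term. The expectation bound then follows by integrating the tail of $W=\|\Pi_{K\cap L^{\perp}}(\mu^*+z)-\mu^*\|^2 - \|\bar\mu^*-\mu^*\|^2$ exactly as in the expectation step of the proof of Theorem~\ref{main}. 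The main obstacle I anticipate is the bookkeeping in quoting the precise misspecified form of Bellec's inequality and in verifying the identity $\Pi_K(\mu^*)=\Pi_{K\cap L^{\perp}}(\mu^*)$; every other ingredient is already in place.
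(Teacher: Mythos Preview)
Your proposal is correct and follows essentially the same route as the paper's own proof: the same orthogonal decomposition, the misspecified Bellec inequality (the paper's Lemma~\ref{tech2}) applied to $K\cap L^{\perp}$, Proposition~\ref{key} at the point $\Pi_K(\mu^*)$, and the same $\chi^2$ tail and integration steps. Your explicit verification that $\Pi_K(\mu^*)=\Pi_{K\cap L^{\perp}}(\mu^*)$ is a detail the paper leaves implicit in the statement of Lemma~\ref{tech2}, and your observation that the supremum defining $f_{\bar\mu^*}$ over $K_n$ dominates the one over $K_n\cap L^{\perp}$ is exactly how the paper passes from Bellec's hypothesis to Proposition~\ref{key}.
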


The above theorem implies that in case the true underlying sequence $\theta^*$ is non concave then the regret of the LSE converges at the rate $n^{-4/5}$ upto constant factors.

Our starting point in proving the above theorem is actually a stronger version of Lemma~\ref{tech} and is a direct consequence of Theorem 12 in~\citet{bellec2015sharp} when applied to the cone $K_n \cap L^{\perp} \subset K_n.$

\begin{lemma}[Bellec]\label{tech2}
Let $\theta^* \in \R^n$ and let $K = K_n.$ Let $\mu^* = (I - P_{L}) \theta^*.$ Let $s > 0$ be such that $f_{\Pi_{K}(\mu^*)}(s) \leq \frac{s^2}{2}.$ Then for any $x >0$ the following inequality holds with probability greater than $1 - \exp(-x),$
\begin{equation*}
\|\Pi_{K \cap L^{\perp}}(\mu^* + z) - \mu^*\|^2 \leq \|\mu^* - \Pi_{K \cap L^{\perp}}(\mu^*)\|^2 + 2 \max\{s^2,8\sigma^2 x\}.
\end{equation*}
Here $z$ is a $n$ dimensional gaussian random vector with mean zero and covariance matrix $\sigma^2 I.$
\end{lemma}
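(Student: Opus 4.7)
The plan is to derive Lemma~\ref{tech2} through the standard ``basic inequality $+$ local Gaussian width $+$ Gaussian concentration'' chain of reasoning of Chatterjee (2014), sharpened in the Bellec style to accommodate the misspecified target $\mu^*\notin K\cap L^{\perp}$. Throughout, write $\hat{\mu}:=\Pi_{K\cap L^{\perp}}(\mu^*+z)$ and $\bar{\mu}:=\Pi_{K\cap L^{\perp}}(\mu^*)$; note that $\Pi_K(\mu^*)=\bar{\mu}$ because $\mu^*\in L^{\perp}$ and $K$ splits orthogonally into $L\oplus(K\cap L^{\perp})$, so the hypothesis $f_{\Pi_K(\mu^*)}(s)\le s^2/2$ reads $f_{\bar{\mu}}(s)\le s^2/2$ as a local Gaussian width in $K\cap L^{\perp}$.

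First I would write down the basic inequality for the LSE on the convex set $K\cap L^{\perp}$: from $\|\hat{\mu}-(\mu^*+z)\|^2\le\|\bar{\mu}-(\mu^*+z)\|^2$ one expands to obtain
\begin{equation*}
\|\hat{\mu}-\mu^*\|^2 \;\le\; \|\bar{\mu}-\mu^*\|^2 + 2\langle z,\hat{\mu}-\bar{\mu}\rangle.
\end{equation*}
Next I would use the projection inequality $\langle \mu^*-\bar{\mu},\hat{\mu}-\bar{\mu}\rangle\le 0$ (since $\bar{\mu}=\Pi_{K\cap L^{\perp}}(\mu^*)$ and $\hat{\mu}\in K\cap L^{\perp}$) to get the Pythagoras-type bound $\|\hat{\mu}-\mu^*\|^2\ge \|\hat{\mu}-\bar{\mu}\|^2+\|\bar{\mu}-\mu^*\|^2$. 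Setting $T:=\|\hat{\mu}-\bar{\mu}\|$ and combining the two displays cancels the misspecification error and yields the clean inequality $T^2\le 2\langle z,\hat{\mu}-\bar{\mu}\rangle\le 2 g(T)$, where
\begin{equation*}
g(r)\;:=\;\sup_{\nu\in K\cap L^{\perp},\;\|\nu-\bar{\mu}\|\le r}\langle z,\nu-\bar{\mu}\rangle,\qquad \E g(r)=f_{\bar{\mu}}(r).
\end{equation*}

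The crucial structural property is that $g$ is \emph{concave} on $[0,\infty)$ with $g(0)=0$: any convex combination of two admissible vectors $v_1,v_2\in(K\cap L^{\perp}-\bar{\mu})$ with $\|v_i\|\le r_i$ is itself admissible with norm $\le\lambda r_1+(1-\lambda)r_2$, since $K\cap L^{\perp}$ is convex. Concavity together with $g(0)=0$ forces $r\mapsto g(r)/r$ to be nonincreasing. Now apply Gaussian concentration (Theorem~\ref{gaussconc}) to $z\mapsto g(s)$, which is $s$-Lipschitz: on an event of probability at least $1-e^{-x}$,
\begin{equation*}
g(s)\;\le\; f_{\bar{\mu}}(s)+s\sigma\sqrt{2x}\;\le\;\tfrac{s^2}{2}+s\sigma\sqrt{2x},
\end{equation*}
using the hypothesis on $s$.

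On this event I would conclude by a case split on whether $T\le s$. If so, $T^2\le s^2$. If $T>s$, monotonicity of $g(r)/r$ gives $g(T)/T\le g(s)/s$, and combining with $T^2\le 2g(T)$ produces $T\le 2g(s)/s\le s+2\sigma\sqrt{2x}$, hence $T^2\lesssim s^2+\sigma^2 x$. Plugging this back into the basic inequality at Step~1 (using $2g(T)\le 2(g(s)/s)\,T$ in the large-$T$ case, and $2g(T)\le 2g(s)$ in the small-$T$ case) delivers
\begin{equation*}
\|\hat{\mu}-\mu^*\|^2\;\le\;\|\bar{\mu}-\mu^*\|^2+2\max\{s^2,\,8\sigma^2 x\}.
\end{equation*}
The main obstacle I anticipate is not conceptual but bookkeeping: the naive combination of $T\le s+2\sigma\sqrt{2x}$ with $(a+b)^2\le 2(a^2+b^2)$ gives a looser constant than the factor $2$ asserted in the lemma. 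Recovering the sharp $2\max\{s^2,8\sigma^2 x\}$ requires handling the cross term $2s\sigma\sqrt{2x}$ by the elementary inequality $2ab\le\max\{a^2,b^2\}+\min\{a^2,b^2\}$ and absorbing everything into the max, which is the technical device that Bellec's Theorem~12 supplies and which makes the additive decoupling of the approximation and stochastic errors work.
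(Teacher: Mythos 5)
The paper offers no proof of this lemma at all: it is quoted as a direct consequence of Theorem 12 of \citet{bellec2015sharp} applied to the cone $K_n\cap L^{\perp}$. Your reconstruction follows what is in fact the architecture of Bellec's own argument --- basic inequality, the variational characterization of $\bar{\mu}=\Pi_{K\cap L^{\perp}}(\mu^*)$, concavity of the localized width $g$ so that $g(r)/r$ is nonincreasing, Gaussian concentration at the single radius $s$, and a case split on $T=\|\hat{\mu}-\bar{\mu}\|$ versus $s$ --- and each individual step you write is correct, including the identification $\Pi_K(\mu^*)=\Pi_{K\cap L^{\perp}}(\mu^*)$ (which also shows why the hypothesis $f_{\Pi_K(\mu^*)}(s)\le s^2/2$, with $f$ defined over all of $K_n$, dominates the width over $K_n\cap L^{\perp}$ that you actually need).

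The genuine gap is precisely the constant you flag at the end, and the repair you propose will not close it. From $T^2\le 2g(T)$, $g(T)\le (T/s)\,g(s)$ and $g(s)\le s^2/2+s\sigma\sqrt{2x}$, the best available bound is $2\langle z,\hat{\mu}-\bar{\mu}\rangle\le 4g(s)^2/s^2=(s+2\sigma\sqrt{2x})^2$, which equals $4\max\{s^2,8\sigma^2x\}$ when $s^2=8\sigma^2 x$; no algebraic massaging of the cross term $4s\sigma\sqrt{2x}$ can push this below $4\max$, so your route proves the lemma only with $4$ in place of $2$. The missing device is not a sharper elementary inequality but a sharper basic inequality: since $\hat{\mu}$ is the projection of $y=\mu^*+z$ onto the convex set $K\cap L^{\perp}$, the obtuse-angle property at $\hat{\mu}$ gives $\|\hat{\mu}-y\|^2+\|\hat{\mu}-\bar{\mu}\|^2\le\|\bar{\mu}-y\|^2$, which upon expansion yields $\|\hat{\mu}-\mu^*\|^2\le\|\bar{\mu}-\mu^*\|^2+2\langle z,\hat{\mu}-\bar{\mu}\rangle-T^2$, with the crucial extra $-T^2$. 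One then bounds $2g(T)-T^2$ on the concentration event: if $T\le s$ it is at most $2g(s)\le s^2+2s\sigma\sqrt{2x}\le 2\max\{s^2,8\sigma^2x\}$, and if $T>s$ it is at most $\sup_{u}\bigl(2(u/s)g(s)-u^2\bigr)=g(s)^2/s^2\le\bigl(s/2+\sigma\sqrt{2x}\bigr)^2\le\max\{s^2,8\sigma^2x\}$. Replacing your combination of the plain basic inequality with the Pythagoras lower bound by this single strengthened inequality recovers the stated factor $2$; the rest of your argument can stand as written.
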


We now prove Theorem~\ref{misspec} which can be done in exactly the same way as the proof of Theorem~\ref{main}.
\begin{proof}[Proof of Theorem~\ref{misspec}]
Let $K = K_n$ for simplicity. Let $\theta^* \in \R^n.$ Recall that the subspace spanned by the $n$ dimensional vectors $(1,\dots,1)$ and $(1,\dots,n)$ is denoted as $L.$ It is clear that $L$ is the smallest subspace contained in $K.$ Also recall that $P_{L}$ is the orthogonal projection matrix to the subspace $L.$ By definition of the LSE and by orthogonal decomposition of projections we have 
\begin{equation*}
\hat{\theta} = \Pi_{K \cap L^{\perp}}(\theta^* + z)  + P_{L}(\theta^* + z).
\end{equation*}
Let $P_{L^{\perp}} = I - P_{L}.$ Since $\mu^* = P_{L^{\perp}} \theta^*$ we can write
\begin{align*}
\begin{split}
&\|\hat{\theta} - \theta^*\|^2 = \|\Pi_{K \cap L^{\perp}}(\theta^* + z) - P_{L^{\perp}} \theta^*\|^2 + \|P_{L}(\theta^* + z) - P_{L} \theta^*\|^2 = \\& \|\Pi_{K \cap L^{\perp}}(\mu^* + z) - \mu^*\|^2 + \|P_{L} z\|^2.
\end{split}
\end{align*}
The first equality is a sum of squares decomposition using orthogonality. The second equality is because $\Pi_{K \cap L^{\perp}}(\theta^* + z) = \Pi_{K \cap L^{\perp}}(\mu^* + z)$ by definition of $\mu^*.$ This can be again checked by the usual KKT conditions for the projection of a point onto a closed convex cone. Now the term $\|\Pi_{K \cap L^{\perp}}(\mu^* + z) - \mu^*\|^2$ can be controlled by an application of Lemma~\ref{tech2} and Proposition~\ref{key} which can be used since $\Pi_{K}(\mu^*) \in K.$ The rest of the proof goes through verbatim as in the proof of Theorem~\ref{main}. 
\end{proof}

\section*{Acknowledgements}
The author thanks Adityanand Guntuboyina, John Lafferty, Bodhisattva Sen and Yuancheng Zhu for helpful discussions.

\bibliographystyle{plainnat}
\bibliography{convrefer}

\appendix
\section*{Appendix}
\begin{lemma}\label{subg}
Let $X_1,X_2\dots,X_n$ be random variables such that the following holds for every $1 \leq i \leq n$ and $a >0,$
\begin{equation}\label{appe1}
P(X_i \geq \E X_i + ax) \leq \exp(-\frac{x^2}{2}) \:\:\: \forall x \geq 0.
\end{equation}
Then the following is true:
\begin{equation*}
\E \max_{1 \leq i \leq n} X_i \leq \max_{1 \leq i \leq n} \E X_i + 2 a \left(\sqrt{2 \log n} + \sqrt{2 \pi}\right).
\end{equation*}
\end{lemma}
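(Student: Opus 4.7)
The plan is to prove Lemma~\ref{subg} via a standard one-sided sub-Gaussian maximal inequality, reducing to an integral of a tail bound and splitting at the critical level $a\sqrt{2\log n}$.

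First I would center. Set $Y_i = X_i - \E X_i$. Since $X_i \leq \max_j \E X_j + Y_i$, taking a maximum in $i$ and then expectations gives $\E \max_i X_i \leq \max_i \E X_i + \E \max_i Y_i$. Hypothesis~\eqref{appe1} with $t = ax$ translates into the sub-Gaussian tail $P(Y_i \geq t) \leq \exp(-t^2/(2a^2))$ for every $t \geq 0$ and every $i$, so it suffices to show $\E \max_i Y_i \leq 2a(\sqrt{2\log n} + \sqrt{2\pi})$.

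For that bound I would apply the layer-cake identity to the nonnegative random variable $(\max_i Y_i)_+$, together with the pointwise inequality $(x)_+ \geq x$:
\[
\E \max_i Y_i \;\leq\; \E \bigl(\max_i Y_i\bigr)_+ \;=\; \int_0^\infty P\bigl(\max_i Y_i > t\bigr)\, dt,
\]
and control the integrand by $\min\{1,\, n e^{-t^2/(2a^2)}\}$ via a union bound. Splitting the integral at $t_0 = a\sqrt{2\log n}$ (chosen so that $n e^{-t_0^2/(2a^2)} = 1$), the piece on $[0, t_0]$ contributes at most $t_0 = a\sqrt{2\log n}$. For the remaining tail, the substitution $u = t/a$ together with the Chernoff bound $P(Z > c) \leq e^{-c^2/2}$ for a standard normal $Z$ (obtained by optimizing $P(Z > c) \leq e^{-\lambda c + \lambda^2/2}$ at $\lambda = c$) yields
\[
n\int_{t_0}^\infty e^{-t^2/(2a^2)}\, dt \;=\; na\sqrt{2\pi}\, P\bigl(Z > \sqrt{2\log n}\bigr) \;\leq\; na\sqrt{2\pi}\cdot n^{-1} \;=\; a\sqrt{2\pi}.
\]
Summing the two pieces gives $\E \max_i Y_i \leq a\sqrt{2\log n} + a\sqrt{2\pi}$, which is comfortably within the stated bound $2a(\sqrt{2\log n}+\sqrt{2\pi})$; the case $n=1$ is trivial since the right-hand side then reduces to $\max_i \E X_i + 2a\sqrt{2\pi}$.

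There is no serious obstacle. The only point requiring even minor care is ensuring the Gaussian tail on $[t_0, \infty)$ is sharp enough to absorb the factor of $n$ coming from the union bound, for which the Chernoff estimate is comfortably sufficient; the constants $2$ and $\sqrt{2\pi}$ in the conclusion provide generous slack.
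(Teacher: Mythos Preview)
Your proof is correct and follows essentially the same route as the paper: center, apply the layer-cake formula, use a union bound to get the envelope $\min\{1, n e^{-t^2/(2a^2)}\}$, and split the integral at $a\sqrt{2\log n}$. Your use of $(\max_i Y_i)_+$ is slightly cleaner than the paper's two-sided integral and in fact yields the bound without the leading factor of $2$, so the stated constant is met with room to spare.
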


\begin{proof}
Let $m = \max_{1 \leq i \leq n} \E X_i.$ Define $Y_i = X_i - m.$ Then by~\eqref{appe1}
we have sub gaussian tail behaviour for every $1 \leq i \leq n,$
\begin{equation}\label{ybd}
P(Y_i \geq x) \leq \exp(-\frac{x^2}{2 a^2}) \:\:\: \forall x \geq 0.
\end{equation}
Now we have
\begin{align*}
&\E \max Y_i = \int_{-\infty}^{\infty} P(\max Y_i \geq x) dx \leq \int_{-\infty}^{\infty} \min\{n \:\:\exp(-\frac{x^2}{2 a^2}),1\} dx = \\& 2 \int_{0}^{\infty} \min\{n \:\:\exp(-\frac{x^2}{2 a^2}),1\} \leq 2 \int_{0}^{a \sqrt{2 \log n}} 1 dx + 2 \int_{a \sqrt{2 \log n}}^{\infty} n \:\:exp(-\frac{x^2}{2 a^2}) dx 
\end{align*}
where the first inequality follows from a simple union bound argument alongwith~\eqref{ybd}. The rest follows from simple integral calculus. Now a standard fact about Gaussian tails give us the following inequality
\begin{equation*}
\int_{a \sqrt{2 \log n}}^{\infty} \:\:\exp(-\frac{x^2}{2 a^2}) \leq \sqrt{2 \pi} \frac{a}{n}.
\end{equation*}
The last two displays finish the proof of the lemma.

\end{proof}

\begin{lemma}\label{gaulip}
Let $A \subset \R^n$ be a closed set. Fix any $\theta^* \in \R^n$ and $t > 0.$ Define the function $f:\R^n \rightarrow \R$ as follows:
\begin{equation*}
f(z) = \sup_{\theta \in A: \|\theta - \theta^*\| \leq t} \langle z,\theta - \theta^* \rangle.
\end{equation*}
Then $f$ is a Lipschitz function of $z$ with Lipschitz constant $t.$
\end{lemma}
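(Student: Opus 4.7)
The plan is to verify Lipschitz continuity directly from the definition, exploiting the fact that $f$ is a supremum of affine functions of $z$ indexed by a set of vectors $\theta - \theta^*$ whose Euclidean norm is uniformly bounded by $t$.

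First I would fix arbitrary $z, z' \in \R^n$ and an arbitrary admissible $\theta \in A$ with $\|\theta - \theta^*\| \leq t$. Writing
\[
\langle z, \theta - \theta^* \rangle = \langle z', \theta - \theta^* \rangle + \langle z - z', \theta - \theta^* \rangle,
\]
I would apply Cauchy--Schwarz to the second inner product and use the norm bound $\|\theta - \theta^*\| \leq t$ to obtain
\[
\langle z, \theta - \theta^* \rangle \leq \langle z', \theta - \theta^* \rangle + t \, \|z - z'\|.
\]
Since this holds for every admissible $\theta$, the right-hand side is bounded above by $f(z') + t \|z - z'\|$ for every such $\theta$. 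Taking the supremum over admissible $\theta$ on the left then yields $f(z) \leq f(z') + t \|z - z'\|$.

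The argument is symmetric in $z$ and $z'$, so swapping the roles gives $f(z') \leq f(z) + t \|z - z'\|$, and together these two inequalities give $|f(z) - f(z')| \leq t \|z - z'\|$, which is exactly the Lipschitz bound claimed. There is no real obstacle here: the only subtlety worth noting is that one does not need the supremum to be attained (so closedness of $A$ is not essential for this step), since the inequality passes to the supremum regardless. This completes the plan.
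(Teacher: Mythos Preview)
Your proof is correct and follows essentially the same approach as the paper: decompose the inner product, apply Cauchy--Schwarz together with the bound $\|\theta-\theta^*\|\le t$, and then pass to the supremum. The only (minor) difference is that the paper first invokes compactness of $\{\theta\in A:\|\theta-\theta^*\|\le t\}$ to pick a maximizer $\tilde\theta$ and works with it, whereas you argue for arbitrary admissible $\theta$ and take the supremum at the end; as you note, your version does not actually need closedness of $A$ or attainment of the supremum.
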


\begin{proof}
Since the function $\langle z,\theta - \theta^* \rangle$ is a continuous function of $z$ and the set $\{\theta \in A: \|\theta - \theta^*\| \leq t\}$ is a compact set, its supremum is attained at $\tilde{\theta}$ say. Then we have 
\begin{align*}
\begin{split}
&f(z) - f(z^{'}) = \langle z,\tilde{\theta} - \theta^* \rangle - f(z^{'}) \leq \langle z,\tilde{\theta} - \theta^* \rangle - \langle z^{'},\tilde{\theta} - \theta^* \rangle = \langle z - z^{'},\tilde{\theta} - \theta^* \rangle \leq \\&
\|z - z^{'}\| \|\tilde{\theta} - \theta^* \| \leq t \|z - z^{'}\|.
\end{split}
\end{align*}
The first inequality is because we set $\theta = \tilde{\theta}$ instead of taking supremum over $\theta,$ the second inequality is just the Cauchy Schwarz inequality and the last inequality follows from the fact that $\|\tilde{\theta} - \theta^*\| \leq t$ by the choice of $\tilde{\theta}.$ This finishes the proof of the lemma.
\end{proof}

\end{document}